\newif\ifPreprint \Preprinttrue
\newif\ifSubmission \Submissionfalse
\patchcmd{\@settitle}{\uppercasenonmath\@title}{\scshape\large}{}{}
\patchcmd{\@setauthors}{\MakeUppercase}{\scshape\normalsize}{}{}
\newcommand{\YB}{\todo[author=YB, color=green!50, size=\small]}
\newcommand{\field}{\mathbb}
\newcommand{\reals}{\field{R}}
\newcommand{\R}{\reals}
\newcommand{\st}{\text{s.t.}}
\newcommand{\fcdot}{\,\cdot\,}
\newcommand{\fcarg}[1]{\def\fc@rg{#1}\ifx\fc@rg\empty\fcdot\else\fc@rg\fi}
\newcommand{\abs}[1]{\lvert\fcarg{#1}\rvert}
\newcommand{\Abs}[1]{\left\lvert#1\right\rvert}
\newcommand{\defset}[3][\defsep]{\set{#2#1#3}}
\newcommand{\Defset}[3][\defsep]{\Set{#2#1#3}}
\newcommand{\set}[1]{\{#1\}}
\newcommand{\Set}[1]{\left\{#1\right\}}
\DeclareMathOperator*{\argmin}{arg\,min}
\newcommand{\abbr}[1][abbrev]{#1.\xspace}
\newcommand{\eg}{\abbr[e.g]}
\newcommand{\ie}{\abbr[i.e]}
\newcommand{\Wlog}{\abbr[w.l.o.g]}
\newcommand{\wrt}{\abbr[w.r.t]}
\newcommand{\define}{\mathrel{{\mathop:}{=}}}
\newtheorem{theorem}{Theorem}
\newtheorem{remark}{Remark}
\newtheorem{lemma}{Lemma}
\newtheorem{definition}{Definition}
\newtheorem{observation}{Observation}
\newtheorem{result}{Result}
\newcommand{\myeps}{\varepsilon}
\newcommand{\rev}[1]{#1}
\newcommand{\revFirst}[1]{#1}
\newcommand{\revSec}[1]{#1}
\newcommand{\revThird}[1]{#1}
\begin{document}

\title[On Bilevel Problems with Continuous and Nonconvex Lower Levels]{
  On a Computationally Ill-Behaved Bilevel Problem\\
  with a Continuous and Nonconvex Lower Level}

\author[Y. Beck, D. Bienstock, M. Schmidt, J. Thürauf]{
  Yasmine Beck, Daniel Bienstock, Martin Schmidt, Johannes Thürauf}

\address[Y. Beck, M. Schmidt, J. Thürauf]{%
  Trier University,
  Department of Mathematics,
  Universitätsring 15,
  54296 Trier,
  Germany}

\email{yasmine.beck@uni-trier.de}
\email{martin.schmidt@uni-trier.de}
\email{johannes.thuerauf@uni-trier.de}

\address[D. Bienstock]{%
  Columbia University,
  Department of Industrial Engineering and Operations Research,
  500 W.\ 120th Street \#315,
  New York, NY 10027,
  United States}

\email{dano@columbia.edu}

\date{\today}

\begin{abstract}
  It is well known that bilevel optimization problems are hard to solve
both in theory and practice.
In this paper, we highlight a further computational difficulty
when it comes to solving bilevel problems with continuous but
nonconvex lower levels.
Even if the lower-level problem is solved to $\myeps$-feasibility
regarding its nonlinear constraints for an arbitrarily small but
positive $\myeps$,
the obtained bilevel solution as well as its objective value may
be arbitrarily far away from the actual bilevel solution and its
actual objective value.
This result even holds for bilevel problems
for which the nonconvex lower level is uniquely solvable, for which
the strict complementarity condition holds, \revFirst{for which the
  feasible set is convex, and for which Slater's constraint
qualification is satisfied for all feasible upper-level decisions}.
Since the consideration of $\myeps$-feasibility cannot be avoided
when solving nonconvex problems to global optimality, our result
shows that computational bilevel optimization with continuous and
nonconvex lower levels needs to be done with great care.
Finally, we illustrate that the nonlinearities in the lower level are
the key reason for the observed bad behavior by showing that
linear bilevel problems behave much better at least on the level
of feasible solutions.

\ifSubmission
\begin{center}
  \vspace*{1em}
  \emph{Communicated by Miguel F. Anjos}
\end{center}
\fi


\end{abstract}

\keywords{Bilevel optimization,
Nonconvex lower levels,
Approximate feasibility,
Global optimization%
%
%
}
\subjclass[2010]{90-XX, 
90C26, 
90C31
%
}

\maketitle

\section{Introduction}
\label{sec:introduction}

Bilevel optimization problems are known to be notoriously hard to
solve and this holds true both in theory and in practice.
In theory, bilevel problems are strongly NP-hard
even if all objective functions and constraints are linear and all
variables are continuous; see \textcite{Hansen-et-al:1992}.
This, of course, is also reflected in computational practice since
linear bilevel problems are inherently nonsmooth and nonconvex problems.
Moreover, the single-level reformulations used to solve linear bilevel
problems in practice are nonconvex, complementarity-constrained
problems.
Their linearization requires big-$M$ parameters that are
hard to obtain in general \parencite{Kleinert_et_al:2020} and that
often lead to numerically badly posed problems, which are hard to
tackle even for state-of-the-art commercial solvers.

In the last years, algorithmic research on bilevel optimization
focused on more and more complicated lower-level problems such as
mixed-integer linear models
\parencite{Fischetti-et-al:2017,Fischetti-et-al:2018b},
nonlinear but still convex models
\parencite{Kleinert_et_al:2021b},
or problems in the lower level with uncertain data
\parencite{Burtscheidt-Claus:2020,Buchheim_Henke:2022,Beck_et_al:2022}.
When it comes to the situation of a lower-level problem with
continuous nonlinearities there is not too much literature---in
particular in comparison to the case in which the lower-level problem
is convex; see, \eg, \textcite{Mitsos-et-al:2008,Mitsos:2010,%
  Kleniati-Adjiman:2011,Kleniati-Adjiman:2014a,Kleniati-Adjiman:2014b,%
  Kleniati-Adjiman:2015,Paulavicius-et-al:2016,Paulavivcius-et-al:2020,%
  Paulavicius-Adjiman:2020}.
Due to the brevity of this article, we do not go into the details of
the literature but refer to the seminal
\ifPreprint
book by~\textcite{Dempe:2002} as well as the recent survey
by~\textcite{Kleinert_et_al:2021c} for further discussions of the
relevant literature.
\fi
\ifSubmission
book~\textcite{Dempe:2002} as well as the recent
survey~\textcite{Kleinert_et_al:2021c} for further discussions of the
relevant literature.
\fi

There is one important difference when crossing the border from
(mixed-integer) convex to (mixed-integer) nonconvex lower-level
problems.
The lower-level problem can, in general, not be solved to global
optimality anymore in an exact sense in finite time since we need to
exploit  techniques such as spatial branching to tackle nonconvexities.
These techniques only lead to finite algorithms for prescribed and
strictly positive feasibility tolerances;
see, e.g.,~\textcite{Locatelli_Schoen:2013} for more detailed
discussions.
Note that this is in clear contrast to, \eg, linear optimization.
Here, the simplex method is an exact method in the sense that, if
applied using exact arithmetic, the method computes a global optimal
solution without any errors; see, \eg, \textcite{Applegate_et_al:2007}.
The same applies to simplex-based branch-and-bound methods for
solving mixed-integer linear optimization problems.
Algorithms as such are not available for continuous but nonconvex
problems.
This means that, just due to algorithmic necessities, we cannot expect
to get exact feasible solutions of the lower-level problem anymore
when doing computations for continuous but nonconvex lower-level
  problems.
Instead, we have to deal with $\myeps$-feasible solutions---at least
for the nonlinear constraints of the lower-level problem.

The aim of this paper is to present an exemplary bilevel
optimization problem with continuous variables and a nonconvex
lower-level problem, where the latter algorithmic aspect leads to the
following severe issue:

\begin{quote}
  \emph{Even if the feasibility tolerance for the lower level can be
    made  extremely small, the exact bilevel solution can be
    arbitrarily far away from the bilevel solution that one obtains
    for $\myeps$-feasibility in the lower level, which in
      particular can be superoptimal out of proportion to $\myeps$.
    The same is true for the optimal objective function values.}
\end{quote}

The main idea for the construction of this exemplary bilevel problem
is based on a constraint set presented first
\ifSubmission
in~\textcite{Bienstock_et_al:2021}.
\fi
\ifPreprint
by~\textcite{Bienstock_et_al:2021}.
\fi
We explicitly note here that this construction does not make use of
large constraint coefficient ranges (all coefficients are~$1$)
or arbitrarily large degrees of polynomials
(we only use quadratic or linear terms).
Moreover, when considered in an exact sense, (i) the example's lower-level
problem is uniquely solvable, (ii) strict complementarity holds,
(iii) its convex constraint set satisfies
Slater's constraint qualification for all feasible upper-level
decisions, (iv) the upper level does not contain coupling
constraints, and (v) the overall problem has a unique global solution
as well.
Thus, the bilevel program does not look like a badly-modeled problem
but is shown to behave very badly in a computational sense, i.e.,
if only $\myeps$-feasible points for the nonlinear constraints of the
lower-level problem can be considered.
We also show that the observed pathological behavior arises due
to the nonlinearities as we show that linear bilevel problems behave
much better at least on the level of feasible points.

The example is presented in Section~\ref{sec:problem-statement} and
discussed in an exact sense in Section~\ref{sec:exact}.
Afterward, the example is analyzed in Section~\ref{sec:inexact} for
the case of $\myeps$-feasibility of nonlinear constraints. Section
\ref{sec:finitelinlin} presents an analysis of the linear bilevel case.
Our final conclusions are drawn in Section~\ref{sec:conclusion}.


\section{Problem Statement}
\label{sec:problem-statement}

Let us consider the bilevel problem
\begin{subequations} \label{eq:UL-prob}
  \begin{align}
    \max_{x \in \R^2} \quad & F(x,y) = x_1 - 2y_{n+1} + y_{n+2} \label{eq:UL-obj} \\
    \st \quad & (x_1,x_2) \in [\underbar{$x$}_1,\bar{x}_1]
                \times [\underbar{$x$}_2,\bar{x}_2], \\
                 & y \in S(x),
  \end{align}
\end{subequations}
where~$\underbar{$x$},\, \bar{x} \in \R^2$
with~$1 \leq \underbar{$x$}_i < \bar{x}_i$, $i \in \set{1,2}$,
denote lower and upper bounds on the variables~$x$.
Here,~$S(x)$ is the set of optimal solutions of
the~$x$-parameterized~problem
\begin{subequations} \label{eq:LL-prob}
  \begin{align}
    \max_{y \in \R^{n+2}} \quad & f(x,y) = y_1 - y_n \left(x_1 + x_2
                                  - y_{n+1} - y_{n+2}\right) \\
    \st \quad & y_1 + y_n = \frac{1}{2}, \label{eq:eq-constr} \\
                                & y_i^2 \leq y_{i+1},
                                  \quad i \in \set{1,\ldots,n-1},
                                  \label{eq:quadr-constr} \\
                                & y_i \geq 0, \quad i \in \set{1,\ldots,n},
                                  \label{eq:non-neg} \\
                                & y_{n+1} \in [0,x_1], \label{eq:var-bounds-1} \\
                                & y_{n+2} \in [-x_2,x_2]. \label{eq:var-bounds-2}
  \end{align}
\end{subequations}
We refer to Problem~\eqref{eq:UL-prob} as the upper-level (or the leader's)
problem and to Problem~\eqref{eq:LL-prob} as the lower-level
(or the follower's) problem. Let us point out that the lower-level
constraints~\eqref{eq:eq-constr} and~\eqref{eq:quadr-constr}
together with~$y_1 \geq 0$ have already been considered
in \textcite{Bienstock_et_al:2021} in the context of approximately
feasible solutions for single-level optimization~problems.
Let us further emphasize that the number of variables and
constraints of the lower-level problem is linear in~$n$.

Problem~\eqref{eq:UL-prob} is a linear problem
in both the leader's and the follower's variables.
The only constraints that occur in this problem are variable
bounds for the leader's variables~$x$.
In particular, there are no upper-level constraints that explicitly
depend on the follower's variables~$y$, \ie,
there are no coupling constraints.

Moreover, the feasible set of the lower-level problem is bounded due to
the following. From~\eqref{eq:non-neg} and~\eqref{eq:eq-constr},
we obtain~\mbox{$0 \leq y_1 \leq 1/2$} as well as~\mbox{$0 \leq y_n \leq 1/2$}
for any feasible follower's decision~$y$.
Using Constraints~\eqref{eq:quadr-constr}, we further
obtain~\mbox{$0 \leq y_i \leq \revSec{1}$} for all~$i \in \set{1,\ldots,n}$.
Finally, we have~$0 \leq y_{n+1} \leq \bar{x}_1$ as well
as~$-\bar{x}_2 \leq y_{n+2} \leq \bar{x}_2$ by~\eqref{eq:var-bounds-1}
and~\eqref{eq:var-bounds-2} because the leader's variables~$x$ are bounded.
Since all finitely many lower-level constraints are continuous, the
feasible set of the follower's problem is compact.
In addition to the compactness, the feasible set of the lower-level
problem~\eqref{eq:LL-prob} is non-empty for every feasible leader's
decision~$(x_1,x_2) \in [\underbar{$x$}_1,\bar{x}_1]
\times [\underbar{$x$}_2,\bar{x}_2]$.
For instance, the point
\begin{equation*}
  y_i = \frac{i}{2^{2^n}},\, i \in \set{1,\ldots,n-1}, \quad
  y_n = \frac{1}{2} - \frac{1}{2^{2^n}}, \quad
  y_{n+1} = \frac{1}{2}, \quad \text{and} \quad
  y_{n+2} = 0
\end{equation*}
is strictly feasible \wrt\ the inequality
constraints~\eqref{eq:non-neg}, \eqref{eq:var-bounds-1}
as well as~\eqref{eq:var-bounds-2} and feasible
\wrt\ the equality constraint~\eqref{eq:eq-constr}.
Here, we exploit the assumption that~\mbox{$1 \leq \underbar{$x$}_1,
  \underbar{$x$}_2$} holds to obtain strict feasibility \wrt\ the
variable bounds in~\eqref{eq:var-bounds-1}
and~\eqref{eq:var-bounds-2}.
Moreover,~$y$ is also strictly feasible \wrt\ the inequality
constraints~\eqref{eq:quadr-constr} due to the following.
For all~$i \in \set{1,\ldots,n-2}$, we have
\begin{align*}
  y_{i+1} - y^2_i
  = \frac{i+1}{2^{2^n}} - \left(\frac{i}{2^{2^n}}\right)^2
  = \frac{2^{2^n}(i+1) - i^2}{\left( 2^{2^n} \right)^2 }
  = \frac{2^{2^n} + 2^{2^n}i \left( 1 - \frac{i}{2^{2^n}} \right)}{
  \left( 2^{2^n} \right)^2}
  > 0.
\end{align*}
Furthermore, we have
\begin{align*}
  y_n - y^2_{n-1}
  & = \frac{1}{2} - \frac{1}{2^{2^n}} -
    \left( \frac{n-1}{2^{2^n}} \right) ^2
    = \frac{\left( 2^{2^n} \right) ^2 - 2\cdot 2^{2^n} - 2(n-1)^2}{
    2 \cdot \left( 2^{2^n} \right)^2}
  \\
  & = \frac{2^{2^n} \left( 2^{2^n} - 2 - \frac{2(n-1)^2}{2^{2^n}}\right)}{
  2 \cdot \left( 2^{2^n} \right)^2}
  > 0.
\end{align*}
In particular, this means that the \revFirst{problem satisfies
  Slater's constraint qualification.}
Moreover, the gradient of the single equality
constraint~\eqref{eq:eq-constr} is not the null vector.
Hence, the Mangasarian--Fromovitz constraint qualification (MFCQ) is
also satisfied at every feasible decision of the follower.
Let us further point out that all lower-level constraints are linear except
for the quadratic but convex inequality constraints in~\eqref{eq:quadr-constr}.
Therefore, the feasible set of the lower-level problem~\eqref{eq:LL-prob}
is convex.
Nevertheless, the overall lower-level problem is nonconvex since
the follower's objective function contains bilinear terms.

Before we solve the bilevel problem~\eqref{eq:UL-prob} and~\eqref{eq:LL-prob}
in the following sections, let us brief\/ly summarize the nice properties
of the problem.
The upper-level problem is linear and does not contain coupling constraints.
The feasible set of the lower-level problem is convex and compact.
For every feasible leader's decision, the lower-level problem
further satisfies Slater's constraint qualification \revSec{and the MFCQ
is satisfied for every feasible follower's decision}.


\section{Exact Feasibility}
\label{sec:exact}

In this section, we determine the unique exact solution of the
bilevel problem~\eqref{eq:UL-prob} and~\eqref{eq:LL-prob}.
To this end, we start by solving the lower-level problem~\eqref{eq:LL-prob}
analytically for an arbitrary but fixed feasible leader's
decision~\mbox{$(x_1,x_2) \in [\underbar{$x$}_1,\bar{x}_1]
  \times [\underbar{$x$}_2,\bar{x}_2]$}.

First, we note that any feasible follower's decision~$y$ satisfies~$y_n > 0$.
The reasons are as follows. Let us \revFirst{contrarily} assume that~$y_n = 0$ holds.
Then, Constraint~\eqref{eq:eq-constr} yields~\mbox{$y_1 = 1/2$}.
\revFirst{
  From~$y_{n} = 0$ and~\eqref{eq:quadr-constr}, it follows that~$y_i = 0$ holds
  for all~\mbox{$i \in \set{1,\ldots,n}$}, which contradicts~$y_{1} = 1/2$.
  Consequently, $y_{n} > 0$ holds.}
For later reference, let us brief\/ly summarize the previous observation.

\begin{result}
  For every feasible leader's
  decision~$(x_1,x_2) \in [\underbar{$x$}_1,\bar{x}_1]
  \times [\underbar{$x$}_2,\bar{x}_2]$, a feasible follower's decision~$y$
  satisfies~$y_n > 0$.
\end{result}

The equality constraint~\eqref{eq:eq-constr} thus yields~$y_1 < 1/2$.
From~\eqref{eq:var-bounds-1} and~\eqref{eq:var-bounds-2},
we additionally obtain
\begin{equation*}
  y_n \left( x_1 + x_2 - y_{n+1} - y_{n+2} \right) \geq 0.
\end{equation*}
In particular, the latter term is minimized for~$(y_{n+1},y_{n+2}) = (x_1,x_2)$.
Therefore, the lower-level objective function value can be bounded
from above by
\begin{equation*}
  f(x,y) = y_1 - y_n \left(x_1 + x_2 - y_{n+1} - y_{n+2} \right)
  \leq y_1 < \frac{1}{2}.
\end{equation*}
It is thus evident that an optimal follower's decision~$y^*$
satisfies~$(y^*_{n+1},y^*_{n+2}) = (x_1,x_2)$.
\rev{Here, we can fix~$(y^*_{n+1},y^*_{n+2})$ since these variables
  are subject to simple bound constraints and, in particular,
  they are not coupled to the other variables of the follower.}
Hence, the follower's problem can be reduced to the convex problem
\begin{subequations} \label{eq:reduced-LL-prob}
  \begin{align}
    \max_y \quad & y_1 \\
    \st \quad & y_1 + y_n = \frac{1}{2}, \label{eq:eq-constr-red} \\
                 & y_i^2 \leq y_{i+1}, \quad i \in \set{1,\ldots,n-1},
                   \label{eq:quadr-constr-red} \\
                 & y_i \geq 0, \quad i \in \set{1,\ldots,n}.
  \end{align}
\end{subequations}
As shown above, \revFirst{Problem~\eqref{eq:reduced-LL-prob}
satisfies Slater's constraint qualification.}\footnote{It can also be
  shown that the linear independence constraint qualification is valid
  at all solutions of the follower's problem (for any given leader's
  decision~$x$).
  \revSec{For the latter, see Appendix~\ref{sec:appendix-licq}.}}
\if0 
This is due to the following.
As shown above, a follower's solution satisfies~$y^*_i > 0$ for
all~$i \in \set{1,\ldots,n}$, \ie, the non-negativity
constraints~\eqref{eq:non-neg} are inactive in an optimal follower's decision.
Conversely, all quadratic constraints~\eqref{eq:quadr-constr} are active.
Hence, the Jacobian matrix of the active constraints is given by
\begin{equation*}
  \begin{bmatrix}
    2y^*_1 & -1 \\
    & 2y^*_2 & -1 \\
    & & & \ddots \\
    & & & & 2y^*_{n-2} & -1 \\
    & & & & & 2y^*_{n-1} & -1 \\
    1 & & & & & & 1 \\
    & & & & & & & 1 \\
    & & & & & & & & 1 \\
  \end{bmatrix}.
\end{equation*}
It is easy to verify that the Jacobian matrix has full rank, \ie, LICQ holds.
\YB{Too lazy?}%
\fi 
Again as shown above, the feasible set is compact. Therefore,
Problem~\eqref{eq:reduced-LL-prob} has an optimal solution~$y^*$.
Because of the equality constraint~\eqref{eq:eq-constr-red}, the lower-level
objective function value~$y^*_1$ is maximized by minimizing~$y^*_n$.
\revFirst{From} Constraints~\eqref{eq:quadr-constr-red} \revFirst{and
  the optimality of~$y^{*}$}, \revFirst{we obtain}
\begin{equation*}
  \rev{y^*_i = \left( y^*_1 \right) ^{2^{i-1}}}
  \quad \text{ for all } i \in \set{2,\ldots,n},
\end{equation*}
where~$y^*_1$ denotes the root of the function
\begin{equation} \label{eq:unique-sol}
  h: \left[ 0,\frac{1}{2} \right] \to \R,
  \quad
  z \mapsto z + \rev{z^{2^{n-1}}} - \frac{1}{2}.
\end{equation}
In particular, one can show that $y^*_1$ is the unique root
of~\eqref{eq:unique-sol}.
The function~$h$ is continuous and
strictly increasing on~$[0,1/2]$.
Moreover, we have~$h(0) < 0$ and~$h(1/2) > 0$.
Consequently, there is a unique point~$y^*_1 \in (0,1/2)$ such
that~\mbox{$h(y^*_1) = 0$} holds.
Furthermore, the follower's decision~$y^*$ is the unique solution of
Problem~\eqref{eq:reduced-LL-prob}. To see this, let us assume
that there is another feasible follower's decision~$\hat{y} \neq y^*$
for which the optimal objective function value~$y^*_1$ is obtained, \ie,
$\hat{y}_1 = y^*_1$. Then, there must be at least one quadratic inequality
constraint in~\eqref{eq:quadr-constr-red} that is not satisfied with equality
for~$\hat{y}$. Otherwise, we have~$y^* = \hat{y}$. However, if there is
slack in Constraints~\eqref{eq:quadr-constr-red}, we obtain~$\hat{y}_n > y^*_n$.
Then,~\eqref{eq:eq-constr-red} yields
\begin{equation*}
  y^*_1 = \frac{1}{2} - y^*_n > \frac{1}{2} - \hat{y}_n = \hat{y}_1,
\end{equation*}
which is a contradiction to the optimality of~$\hat{y}$.

\begin{result} \label{thm:singleton}
  For every feasible leader's
  decision~$(x_1,x_2) \in [\underbar{$x$}_1,\bar{x}_1]
  \times [\underbar{$x$}_2,\bar{x}_2]$, the set of optimal solutions of
  the lower-level problem~\eqref{eq:LL-prob} is a singleton.
\end{result}

In particular, Result~\ref{thm:singleton} means that there is no need
to distinguish between the optimistic and the pessimistic approach
to bilevel optimization; see, \eg,~\textcite{Dempe:2002}.
Thus, we can finally determine an optimal leader's decision for
the overall bilevel problem~\eqref{eq:UL-prob} and~\eqref{eq:LL-prob}.
As~$(y^*_{n+1},y^*_{n+2}) = (x_1,x_2)$ holds in the optimal follower's
decision~$y^*$, the leader actually solves the linear problem
\begin{equation*}
  \max_x \quad -x_1 + x_2
  \quad \st \quad (x_1,x_2) \in [\underbar{$x$}_1,\bar{x}_1]
  \times [\underbar{$x$}_2,\bar{x}_2].
\end{equation*}
The unique optimal solution is given by~$x^* = (\underbar{$x$}_1,\bar{x}_2)$.

\begin{result}
  The bilevel problem~\eqref{eq:UL-prob} and~\eqref{eq:LL-prob} has
  a unique~solution given by~\mbox{$x^* = (\underbar{$x$}_1,\bar{x}_2)$}
  with an optimal objective function value
  of~$F^* = -\underbar{$x$}_1 + \bar{x}_2$.
\end{result}

To sum up, the bilevel problem~\eqref{eq:UL-prob} and~\eqref{eq:LL-prob}
not only has nice properties such as a convex and bounded lower-level
feasible set as well as a lower-level problem that satisfies
Slater's constraint qualification, but also has a unique optimal solution.
\rev{Moreover, the strict complementarity condition holds for which we give
a proof in Appendix~\ref{sec:appendix}.}
Overall, the bilevel problem~\eqref{eq:UL-prob} and~\eqref{eq:LL-prob} is
thus well-behaved.


\section{$\myeps$-Feasibility}
\label{sec:inexact}

In what follows, we determine an optimal solution of the bilevel
problem~\eqref{eq:UL-prob} and~\eqref{eq:LL-prob} under the assumption
that we allow for small violations of the nonlinear lower-level constraints
according to the following notion\revFirst{, which is motivated by the
  necessary special treatment of nonlinear (and, in particular,
  nonconvex) constraints in global optimization as we discussed it in
  the introduction.}

\begin{definition}
  \revFirst{Let~$0 < \myeps \in \R$, $f: \R^n \rightarrow \R$,
  $g: \R^n \rightarrow \R^m$, and $h: \R^n \rightarrow \R^p$ be given.
  A point~\mbox{$x \in \R^n$} is called \emph{$\myeps$-feasible} for
  the problem~$\max_{x \in \R^n} \defset{f(x)}{g(x) \leq 0, h(x) = 0}$
  if \mbox{$g_i(x) \leq 0$} and $h_j(x) = 0$ holds for all $i \in
  \set{1,\ldots,m} \setminus N$ as well as for all $j \in
  \set{1,\ldots,p} \setminus M$ and if~$\max \set{\max \defset{g_i(x)}{i \in N}, \max
    \defset{\abs{h_j(x)}}{j \in M}} \leq \myeps$ holds, where~$N
  \subseteq \set{1,\ldots,m}$ and $M \subseteq \set{1,\ldots,p}$
  denote the index sets of all nonlinear inequality and equality
  constraints.}
\end{definition}

A follower's decision of the form
\begin{equation} \label{eq:eps-feas-y}
  y_i = 2^{{-2}^{i-1}},\, i \in \set{1,\ldots,n-1}, \
  y_n = 0, \
  y_{n+1} \in [0,x_1], \ \text{and} \
  y_{n+2} \in [-x_2,x_2]
\end{equation}
is~$\myeps$-feasible with $\myeps = 2^{-2^{n-1}}$ for every feasible
leader's decision~$(x_1,x_2) \in [\underbar{$x$}_1,\bar{x}_1]
\times [\underbar{$x$}_2,\bar{x}_2]$ due to the following.
The constraints
\begin{align*}
  y_1 + y_n & = \frac{1}{2}, \\
  y^2_i & \leq y_{i+1}, \quad i \in \set{1,\ldots,n-2}, \\
  y_i & \geq 0, \quad i \in \set{1,\ldots,n}, \\
  y_{n+1} & \in [0,x_1], \\
  y_{n+2} & \in [-x_2,x_2]
\end{align*}
are (exactly) satisfied, whereas only the constraint~$y^2_{n-1} \leq y_n$
is violated by \mbox{$\myeps = 2^{-2^{n-1}}$}.
Moreover, the lower-level objective function value is~$1/2$.

\begin{result}
  \rev{If~$\myeps \geq 2^{-2^{n-1}}$}, there is an~$\myeps$-feasible
  follower's decision~$y$ with~$y_n = 0$ for every feasible leader's
  decision~$(x_1,x_2) \in [\underbar{$x$}_1,\bar{x}_1]
  \times [\underbar{$x$}_2,\bar{x}_2]$.
\end{result}

It can easily be seen that by increasing~$n$, we can obtain arbitrarily
small values for $\myeps$.
In particular, there is no $\myeps$-feasible follower's decision
that yields a better objective function value than~$1/2$. The reasons are
as follows. Using the equality constraint~\eqref{eq:eq-constr}, the
lower-level objective function can be re-written as
\begin{equation*}
  f(x,y) = \frac{1}{2} - y_n - y_n \left(x_1 + x_2 - y_{n+1} - y_{n+2}\right).
\end{equation*}
For all~$\myeps$-feasible follower's decisions, we have
\begin{equation*}
  x_1 + x_2 - y_{n+1} - y_{n+2} \geq 0
\end{equation*}
because of the linear constraints~\eqref{eq:var-bounds-1}
and~\eqref{eq:var-bounds-2}. Consequently, a lower-level objective
function value larger than~$1/2$ could only be obtained if~$y_n < 0$. However,
this is not $\myeps$-feasible w.r.t.\ the variable
bounds~\eqref{eq:non-neg}.
In addition, a follower's decision of the form stated
in~\eqref{eq:eps-feas-y} is thus an $\myeps$-feasible solution
of the lower-level problem~\eqref{eq:LL-prob}.
Let us point out that, in contrast to the exact case,
the follower's variables~$y_{n+1}$ and~$y_{n+2}$
do not affect the lower-level objective function value in this setting
and can thus be chosen arbitrarily.
Therefore, the set of $\myeps$-feasible follower's solutions is not a
singleton anymore.

\begin{result} \label{thm:no-singleton}
  \rev{If~$\myeps \geq 2^{-2^{n-1}}$}, the set of $\myeps$-feasible
  follower's solutions is not a singleton for every feasible leader's
  decision~$(x_1,x_2) \in [\underbar{$x$}_1,\bar{x}_1]
  \times [\underbar{$x$}_2,\bar{x}_2]$.
\end{result}

Due to Result~\ref{thm:no-singleton}, we need to distinguish
between optimistic and pessimistic solutions.
Following the optimistic approach, the follower chooses~$y_{n+1} = 0$
as well as~$y_{n+2} = x_2$ such as to favor the leader \wrt\ the
leader's objective function value.
Therefore, the leader actually solves the linear problem
\begin{equation*}
  \max_x \quad x_1 + x_2
  \quad \st \quad
  (x_1,x_2) \in [\underbar{$x$}_1,\bar{x}_1] \times
  [\underbar{$x$}_2,\bar{x}_2].
\end{equation*}
The optimistic optimal leader's decision is thus given
by~$x^* = (\bar{x}_1,\bar{x}_2)$.
In the pessimistic case, the follower chooses~$y_{n+1} = x_1$ as well
as~$y_{n+2} = -x_2$ such as to adversely affect the leader's decision.
In this setting, the leader solves the linear problem
\begin{align*}
  \max_x \quad -x_1 - x_2
  \quad \st \quad (x_1,x_2) \in [\underbar{$x$}_1,\bar{x}_1]
  \times [\underbar{$x$}_2,\bar{x}_2].
\end{align*}
Hence, the pessimistic optimal leader's decision is given
by~$x^* = (\underbar{$x$}_1,\underbar{$x$}_2)$.
To sum up, let us state the main observations of this section.

\begin{result}
  \label{res:final}
  Let~$\myeps \geq 2^{{-2}^{n-1}}$ and suppose that we allow
  for~$\myeps$-feasible follower's solutions.
  Then, the optimistic optimal solution
  of the bilevel problem~\eqref{eq:UL-prob} and~\eqref{eq:LL-prob}
  is given by~$x_{\text{o}}^* = (\bar{x}_1,\bar{x}_2)$ with an optimal objective
  function value of~$F^*_{\text{o}} = \bar{x}_1 + \bar{x}_2$.
  The pessimistic optimal solution is given
  by~$x_{\text{p}}^* = (\underbar{$x$}_1,\underbar{$x$}_2)$ with an optimal
  objective function value of~$F^*_{\text{p}} = -\underbar{$x$}_1-\underbar{$x$}_2$.
\end{result}

We now finally compare the results of the exact bilevel solution with
the results for the optimistic and pessimistic setting for the case of
only $\myeps$-feasibility of the lower level.
In the optimistic setting, the distance between the solutions is
$\bar{x}_1 - \underbar{$x$}_1$ and the difference between the
corresponding objective function values is $\bar{x}_1 +
\underbar{$x$}_1$.
Two aspects are remarkable.
First, by enlarging the feasible interval for the variable~$x_1$, we
get an arbitrarily large error and, second, this error is
independent of $\myeps$, \ie, this arbitrarily large error occurs
independent of how accurate one solves the lower-level problem.

For the pessimistic setting, the distance between the solution is
$\bar{x}_2 - \underbar{$x$}_2$ and the difference between the
objective function values is $\bar{x}_2 + \underbar{$x$}_2$.
Hence, we obtain the same qualitative behavior but now in dependence
of the variable~$x_2$ instead of $x_1$.

In summary, we obtain the following two main observations.
First, we can be arbitrarily far away from the overall exact bilevel
solution.
Second, we also obtain arbitrarily
large errors regarding the optimal objective function value
of the leader.
The latter is very much in contrast to the situation in single-level
optimization for which sensitivity results are available;
see, \eg, Proposition~4.2.2 in \textcite{Bertsekas:2016}.
This is particularly the case for linear optimization problems,
where standard sensitivity analysis results (see, \eg, Theorem~5.5
in~\textcite{Chvatal:1983}) apply as well and state
that a small change in the right-hand side of the problem's
constraints can only lead to a small change in the optimal objective
function value.

Lastly, let us comment on that only very moderate values of $n$ are
required to get the wrong solution.
Taking the inequality for $\myeps$ from Result~\ref{res:final}, it is
easy to see that for a given tolerance~$\myeps$, the parameter~$n$
needs to satisfy $n \geq \log_2(\log_2(1/\myeps^2))$ so that
\revFirst{numerically computed solutions do not coincide with
  the exact solution} for the given $\myeps$.
For instance, a tolerance of $\myeps = 10^{-8}$ already leads to a
wrong result for $n = 6$.
This particularly means that the considered bilevel problem
is moderate in size \wrt \ the number of constraints and
variables. For~$n=6$, we only have~\rev{16} constraints
and~8 variables on the lower~level.
We further note that the used constraint coefficients are all~1 and
that the coefficients are independent from~$n$ and the given
tolerance~$\myeps$.

A \textsf{Python} code for the example considered in this paper is
publicly available at
\url{https://github.com/m-schmidt-math-opt/ill-behaved-bilevel-example}
and can be used to verify the discussed results.


\section{Analysis of the $\myeps$-Feasible Linear Case}
\label{sec:finitelinlin}

In this section, we analyze the linear bilevel case, \ie, we study the problem
\begin{subequations} \label{eq:linearlinear}
  \begin{align}
    \min_{x, y} \quad
    & c_x^\top x + c_y^\top y
      \label{eq:linlinobj} \\
    \st \quad
    & A x \geq a, \\
    & y \in \argmin_{\bar{y}}\Defset{d^\top\bar{y}}{Cx + D\bar{y} \geq b}
      \label{eq:linear-lower-level}
  \end{align}
\end{subequations}
with~$c_x \in \R^{n_x}$, $c_y,\, d \in \R^{n_y}$, $A \in \R^{m \times
  n_x}$, $a \in \R^m$, $C \in \R^{\ell \times n_x}$, $D \in \R^{\ell
  \times n_y}$, and~$b \in \R^{\ell}$.
We assume that the set~$\defset{(x,y) \in \R^{n_x} \times
  \R^{n_y}}{Ax \geq a,\, Cx + Dy \geq b}$ is non-empty and compact and
that for every feasible upper-level decision~$x$, there exists a
feasible lower-level decision~$y$.
This implies that the lower-level problem is bounded for every
feasible upper-level decision and that the
dual problem of the lower level is feasible.
We also assume that the set $\defset{x \in \R^{n_x}}{Ax \geq a}$
is bounded.
Moreover, we consider the setting in which the
underlying linear algebra and linear optimization routines are of
finite precision only.

When finite-precision procedures are used, an algorithm that solves
Problem~\eqref{eq:linearlinear} will output a pair~$(\hat{x}, \hat{y})$ that
may be slightly infeasible.
The concern, should that happen, is that the solution being output can be
\emph{superoptimal} to a degree that is not proportional to its infeasibility.
As discussed in the previous section, such an outcome can be observed for
general, \ie, nonlinear, bilevel problems.
In this section, however, we show that linear bilevel problems behave
better in some sense.
To this end, we assume that our underlying solver can ensure the
following properties:
\begin{itemize}
\item $A\hat{x} \geq  a - \myeps e_m$ and $C\hat{x} + D\hat{y} \geq b - \myeps e_{\ell}$,
\item $d^\top \hat{y} \geq \min \defset{d^\top y}{C\hat{x} + Dy \geq b} - \myeps$.
\end{itemize}
Here and in what follows, $0 < \myeps < 1$ is a given tolerance, $e_k
\in \R^k$ is the vector of all ones, and $(\hat{x},\hat{y})$ is used to
denote a nearly feasible solution of the bilevel
problem~\eqref{eq:linearlinear}.

Prior to our analysis, we present a general result that will be used
below.
This result can be read from Theorem~3.38 (Page~112) of
\textcite{Conforti2014}.
It can also be obtained from Corollary~3.2b (Page~20) of
\textcite{Schrijver86} or from Theorem~10.2 (Page~121) of
\textcite{Schrijver86}.
We will use the term \textit{size} to refer to the (bit) encoding
length of a matrix, vector, or formulation, as appropriate.

\begin{definition} \label{def:primalbasic}
  Let~$P = \defset{x \in \R^n}{Hx = h, \, x \geq 0}$
  \revThird{with~$H \in \R^{m \times n}$ and~\mbox{$h \in \R^m$}}.
  Given $z \in \R^n$,
  we say that $z$ is \emph{basic} if $H z = h$ and, defining
  $B = B(z) = \defset{j}{z_j \neq 0}$,
  the submatrix $H_B$ of
  $H$ corresponding to the columns in~$B$ has rank~$|B|$.
  Furthermore, if in addition $z \ge 0$, we say that $z$ is
  \emph{basic feasible}.
\end{definition}

\begin{remark}
  Let $P$ be as in Definition \ref{def:primalbasic}. The
  extreme points of $P$ are precisely the vectors~$z$ that are basic
  feasible.
\end{remark}

\begin{theorem}
  \label{thm:sizesours}
  Let $P = \defset{x \in \R^n}{Hx = h, \, x \geq 0}$
  \revThird{with~$H \in \R^{m \times n}$ and~\mbox{$h \in \R^m$}}.
  There is a constant~\revThird{$\kappa(H) > 0$} of size polynomial
  in the size (of the bit-encoding) of $H$ such that, for any
  basic vector~$v$, we have
  \begin{equation*}
    \|v\|_{\infty} \leq \kappa(H) \|h\|_{\infty}.
  \end{equation*}
\end{theorem}
\begin{proof}
  Let $v$ be basic and set $J = \defset{j}{v_j \neq  0}$.
  Since $v$ is basic, there is a subset
  of rows $I$ of $H$ with $|I| = |J|$ such that the following holds:
  \begin{itemize}
  \item [(i)] The submatrix $H_{I,J}$ of $H$ indexed by rows $I$ and
    columns $J$ is invertible.
  \item[(ii)] As a consequence, it holds $v_J = H_{I,J}^{-1} h_I$, where $v_J$ is the
    subvector of $v$ indexed by $J$ and $h_I$ is the subvector of $h$
    indexed by $I$.
  \end{itemize}
  Using submultiplicativity of the norm, we get
  $$\|v\|_{\infty} \le \| H_{I,J}^{-1}\|_{\infty} \| h_I \|_{\infty}.$$
  The result now follows by defining
  $ \kappa(H)$ to be the maximum over all $\| B^{-1} \|_{\infty}$
  for~$B$ being an invertible submatrix of $H$.
  \ifSubmission
  \qed
  \fi
\end{proof}

In Theorem~\ref{thm:sizesours}, we use what is usually termed the
\textit{standard} representation of a polyhedron. Similar statements
can be derived using other representations of polyhedra, e.g., $\defset{x
\in \R^n}{Hx \leq h}$, via well-known reformulations.

\subsection{Linear Optimization with Errors}
\label{sub:singlelevel}

We start with some simple observations for classic, \ie,
single-level, linear problems of the form
\begin{equation} \label{eq:single-lev-LP}
  v^* \define \min_{x \in \R^{n_x}} \Defset{v^\top x}{Mx \geq f}
\end{equation}
with~$v \in \R^{n_x}$, $0 \neq M \in \R^{m \times n_x}$, and~$f \in \R^m$.
Throughout this section, we assume that the feasible region for
problem~\eqref{eq:single-lev-LP} is non-empty and bounded.
Moreover, we denote the corresponding dual problem by
\begin{equation*}
  \max_{z \in \reals^{m}} \Defset{f^{\top} z}{M^{\top}z = v, \ z \geq 0}.
\end{equation*}
Next, we will derive estimates involving near-feasible and near-optimal
points for Problem~\eqref{eq:single-lev-LP}.
\begin{lemma}
  \label{thm:sensitivity-lemma}
  Suppose that there is a point~$\hat{x} \in \R^{n_x}$ that is nearly feasible for
  Problem~\eqref{eq:single-lev-LP}, \ie, $M \hat{x} \geq f - \myeps e_{m}$.
  Then, the following holds.
  \begin{enumerate}
  \item[(a)] It holds
    \begin{equation*}
      v^\top \hat{x} \geq v^* - \myeps \kappa(M)\|v\|_{\infty},
    \end{equation*}
    where~$\kappa(M) > 0$ is a constant of polynomial size.
  \item[(b)] There exists $x^*$ feasible for
    Problem~\eqref{eq:single-lev-LP} such that
    \begin{equation*}
      \| x^* - \hat x\|_{\infty} \leq  \myeps \kappa_1(M)
    \end{equation*}
    holds for a certain constant $\kappa_1(M) > 0$ of polynomial size.
  \end{enumerate}
\end{lemma}
\begin{proof}
  \begin{enumerate}
  \item[(a)] Let~$z^*$ be an optimal solution of the dual problem
    of~\eqref{eq:single-lev-LP}.
    Then,
    \begin{equation*}
      v^\top \hat{x}
      = (z^*)^\top M \hat{x} \geq (z^*)^\top (f - \myeps e_m)
      = v^* - \myeps \|  z^* \|_1
    \end{equation*}
    holds. In particular,
    this equation applies to any dual optimal~$z^*$. Since~$z \geq 0$ is a constraint
    of the dual problem, the dual feasible region is a pointed polyhedron, and, \Wlog,
    $z^*$ is an extreme point. The result now follows from Theorem
    \ref{thm:sizesours}.
  \item[(b)] Consider the linear optimization problem
    \begin{subequations}
      \label{eq:primdist0}
      \begin{align}
        \min_{x, \delta} \quad
        & \delta
          \label{eq:inftnorm}\\
        \st \quad
        & \|x - \hat{x}\|_{\infty} \leq \delta,
          \label{eq:primdist0:inf-norm}\\
        & M x \geq f. \label{ineq:primal-feas0}
      \end{align}
    \end{subequations}
    That this is indeed a linear program follows by reformulating
    \eqref{eq:primdist0:inf-norm} as
    \begin{equation}
      \label{eq:reform}
      x_j - \delta \, \le \, \hat{x}_j,
      \quad
      -x_j - \delta \, \le \, -\hat{x}_j,
      \quad
      1 \le j \le n_x.
    \end{equation}
    Clearly, the resulting problem is both
    feasible and bounded since \eqref{eq:single-lev-LP} is.
    Moreover, $(\hat x, 0)$ satisfies the constraints of
    this problem with additive error of at most~$\myeps$.
    We can therefore apply (a) to this problem to obtain $x^*$
    being feasible for Problem~\eqref{eq:single-lev-LP} and such that
    $$ \| x^* - \hat x\|_\infty \leq \myeps\kappa_1(M)$$
    holds, where $\kappa_1(M)$ is the $\kappa$-constant (of polynomial
    size in $M$) that applies to the matrix for
    Constraints~\eqref{eq:reform} and \eqref{ineq:primal-feas0}.
    \ifSubmission
    \qed
    \fi
    \ifPreprint
    \qedhere
    \fi
  \end{enumerate}
\end{proof}

Let us emphasize that the result in Lemma~\ref{thm:sensitivity-lemma}
applies for any~$\myeps > 0$, no matter how large.
In particular, it is not required that~$\myeps$ is ``sufficiently
small''.

\begin{lemma} \label{thm:stability-lemma}
  Suppose that there is a nearly primal-dual feasible and nearly
  primal-dual optimal pair~$(\hat{x}, \hat{z}) \in \R^{n_x} \times \R^m$
  for Problem~\eqref{eq:single-lev-LP}, \ie,~$(\hat{x}, \hat{z})$
  satisfies
  \begin{itemize}
  \item [(i)] $M \hat{x} \geq f - \myeps e_{m}$,
  \item [(ii)] $ \| M^\top \hat{z} - v \|_{\infty} \leq \myeps$, $\hat{z}
    \geq -\myeps e_m$, and
  \item [(iii)] $v^\top \hat{x} - f^\top \hat{z} \leq \myeps$.
  \end{itemize}
  Then, there exists an optimal solution~$x^*$ for
  Problem~\eqref{eq:single-lev-LP} such that
  \begin{equation*}
    \| x^* - \hat{x} \|_{\infty} \leq \myeps \kappa_3(M, v) \max\{1,\| f \|_{\infty}\}
  \end{equation*}
  holds for a certain constant~$\kappa_3(M,v) > 0$, \rev{whose
    size is polynomial in the size of the input data~$M$
    and~$v$}.
 \end{lemma}
  \begin{proof}
    First, we note that Condition~(ii) simply states that $\hat z$ is
    feasible for the dual of~\eqref{eq:single-lev-LP} up to an error
    of~$\myeps$.  We can thus apply Part~(a) of
    Lemma~\ref{thm:sensitivity-lemma} to obtain
    \begin{align}
      \label{eq:dualsuper}
      f^\top \hat{z} \leq v^* + \myeps \kappa_2(M) \|f\|_{\infty},
    \end{align}
    where $\kappa_2(M)$ is the $\kappa$-constant for the dual
    of~\eqref{eq:single-lev-LP}, which is of polynomial size in $M$.
    Together with (iii), this implies
    \begin{align}
      \label{eq:dualsuper2}
      v^{\top} \hat x \leq v^* + \myeps (1 + \kappa_2(M) \|f\|_{\infty}).
    \end{align}
    Next, we consider the polyhedron given by
    \begin{subequations} \label{eq:primdist}
      \begin{align}
        M x & \geq f, \label{ineq:primal-feas} \\
        v^\top x & \leq v^*, \label{ineq:optimal-value-func}
      \end{align}
    \end{subequations}
    which is feasible and bounded.
    By (i) and~\eqref{eq:dualsuper2}, $\hat x$ satisfies these
    inequalities with feasibility error of at
    most $\myeps (1 + \kappa_2(M) \|f\|_{\infty})$.
    By applying Part~(b) of Lemma~\ref{thm:sensitivity-lemma}, we
    obtain that there is a feasible point $x^*$ for
    \eqref{eq:primdist}, i.e., an optimal solution~$x^*$
    for~\eqref{eq:single-lev-LP}, such that
    \begin{equation*}
      \| x^* - \hat{x} \|_\infty \leq \myeps \kappa_1(M, v)(1 +
      \kappa_2(M)\|f\|_{\infty})
    \end{equation*}
    holds.
    \ifSubmission
    \qed
    \fi
  \end{proof}

\subsection{Application to Linear Bilevel Problems}

We now return to the bilevel setup as stated in~\eqref{eq:linearlinear}.
To this end, note that for a given upper-level decision~$x \in
\R^{n_x}$, the dual of the lower-level
problem~\eqref{eq:linear-lower-level} reads
\begin{subequations} \label{eq:lowerdual}
  \begin{align}
    \max_{z} \quad & (b - Cx)^\top z  \\
    \st \quad & D^\top z = d, \\
                   & z \geq 0.
  \end{align}
\end{subequations}

\begin{lemma}
  \label{thm:bilevel-stability-lemma}
  Let $x \in \R^{n_x}$, $\hat{y} \in \R^{n_y}$, and $\hat{z} \in \R^{\ell}$
  be such that
  \begin{itemize}
  \item[(i)] $Ax \geq a$,
  \item[(ii)] $D\hat{y} \geq b - C\rev{x} - \myeps e_{\ell}$,
  \item[(iii)] $\| D^\top \hat{z} - d\|_{\infty} \leq \myeps$,
    $\hat{z} \geq -\myeps e_\ell$, and
  \item[(iv)] $\rev{d^\top \hat{y} - (b - Cx)^\top \hat{z}} \leq \myeps$.
  \end{itemize}
  Then, there exists an optimal solution~$y^*$ for the~\rev{$x$-parameterized}
  lower-level problem~\eqref{eq:linear-lower-level} such that
  \begin{equation}
    \label{eq:wearedone}
    \| y^* - \hat{y}\|_\infty \leq \myeps
    \kappa_4(A,C,D,a,b,d)
  \end{equation}
  holds for a constant~$\kappa_4(A,C,D,a,b,d) > 0$, whose
  size is polynomial in the size of the input data~$A$, $C$, $D$,
  $a$, $b$, and~$d$.
\end{lemma}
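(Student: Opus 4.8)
The plan is to recognize the $x$-parameterized lower-level problem as an instance of the single-level problem~\eqref{eq:single-lev-LP} and then invoke Lemma~\ref{thm:stability-lemma}, with the only real work being to make the resulting constant independent of the particular feasible~$x$.

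First I would fix the feasible upper-level decision~$x$ with $Ax \geq a$ and rewrite the lower-level problem~\eqref{eq:linear-lower-level} as $\min_{\bar y}\Defset{d^\top \bar y}{D\bar y \geq b - Cx}$. This is exactly Problem~\eqref{eq:single-lev-LP} under the identification $M = D$, $v = d$, and $f = b - Cx$, and its dual is precisely Problem~\eqref{eq:lowerdual}. By the standing assumptions, for every feasible~$x$ the lower level is feasible and bounded, so the prerequisites of Lemma~\ref{thm:stability-lemma} regarding Problem~\eqref{eq:single-lev-LP} are met. Under the above identification, conditions~(i)--(iii) of the present lemma coincide term by term with conditions~(i)--(iii) of Lemma~\ref{thm:stability-lemma}; in particular, $(b - Cx)^\top \hat z$ in~(iii) plays the role of $f^\top \hat z$, and $\hat y$ plays the role of~$\hat x$. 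Applying Lemma~\ref{thm:stability-lemma} therefore yields an optimal solution~$y^*$ of the $x$-parameterized lower-level problem together with the bound $\|y^* - \hat y\|_1 \leq \myeps\, \kappa(D, d, b - Cx)$.

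The point that requires care, and the main obstacle, is that the constant produced by Lemma~\ref{thm:stability-lemma} depends on $f = b - Cx$ and hence, a priori, on the chosen upper-level decision~$x$, whereas the claimed bound~\eqref{eq:wearedone} must hold with a single constant $\kappa(A,C,D,a,b,d)$ valid for all feasible~$x$. To remove this dependence, I would exploit the assumption that $\Defset{x \in \R^{n_x}}{Ax \geq a}$ is bounded. Consequently, $Cx$ ranges over a bounded set as~$x$ varies over the feasible region, so $f = b - Cx$ stays in a bounded set whose size is controlled by the data $A$, $a$, $C$, and~$b$. Tracing the proof of Lemma~\ref{thm:stability-lemma} back to Lemma~\ref{thm:sensitivity-lemma}, the constant is built from $1$-norms of extreme points of the relevant dual polyhedra; via Cramer's rule these are ratios of determinants whose numerators depend polynomially (indeed linearly) on the right-hand-side vector~$f$. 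Hence the constant depends continuously, and with polynomial size, on~$f$, so taking the supremum over the bounded range of $f = b - Cx$ gives a finite bound that is uniform in~$x$.

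Putting these pieces together, I would set $\kappa(A,C,D,a,b,d)$ to be this uniform bound, which absorbs the $D,d$-dependence of the factor $\kappa_2$, the $D$-dependence and the range-of-$f$ dependence of the factor $\kappa_1$, and the control of that range through $A$, $a$, $C$, and~$b$. This yields $\|y^* - \hat y\|_1 \leq \myeps\, \kappa(A,C,D,a,b,d)$, as required, and the size of~$\kappa$ remains polynomial in the size of the input data since each ingredient does and the feasible region for~$x$ is itself bounded in terms of the input data.
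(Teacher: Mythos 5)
Your proposal is correct and follows essentially the same route as the paper's proof: apply Lemma~\ref{thm:stability-lemma} to the $x$-parameterized lower level with $M=D$, $v=d$, $f=b-Cx$, and then use the boundedness of $\defset{x}{Ax \geq a}$ together with the Cramer's-rule structure of the constants to obtain a bound uniform in~$x$. Your elaboration of why the $f$-dependence can be absorbed is a welcome expansion of the paper's rather terse final sentence.
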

\begin{proof}
  By assumption, for a given $x$, the lower-level problem is feasible and
  bounded. We can thus apply Lemma~\ref{thm:stability-lemma} since
  Conditions (ii)--(iv) correspond to Conditions~\mbox{(i)--(iii)} of
  Lemma~\ref{thm:stability-lemma}.
  Thus, there exists an optimal point~$y^*$ for the lower-level
  problem such that
  \begin{equation*}
    \| y^* - \hat{y}\|_\infty
    \leq
    \myeps \kappa_3(D, d) \max\{1,\| b - Cx \|_{\infty} \}
  \end{equation*}
  holds.
  Using the triangle inequality and the submultiplicativity of the
  norm, we obtain
  \begin{equation*}
    \| b - Cx \|_{\infty} \, \leq \, \| b \|_{\infty} + \| C
    \|_{\infty} \|x\|_{\infty}.
  \end{equation*}
  Since the feasible region for the upper-level problem is bounded,
  $\| x \|_{\infty}$ is upper bounded by the $\infty$-norm of some
  extreme point. We can apply Theorem \ref{thm:sizesours} to obtain
  \begin{equation*}
    \| x \|_{\infty} \leq \kappa'(A) \| a \|_{\infty},
  \end{equation*}
  where $\kappa'(A)$ is the $\kappa$-constant (of polynomial size
  in~$A$) for the system $Ax \geq a$.
  The proof is now concluded by appropriately defining
  $\kappa_4(A,C,D,a,b,d)$.
  \ifSubmission
  \qed
  \fi
\end{proof}

Now, we consider the entire bilevel problem~\eqref{eq:linearlinear}
and recall a basic definition from linear optimization.

\begin{definition}
  Let $z \in \R^\ell$ satisfy $D^{\top} z = d$ and
  define~\mbox{$B = B(z) = \defset{j}{z_j \neq 0}$}.
  We say that~$z$ is \emph{dual basic} if the submatrix~$D^{\top}_B$ of
  $D^{\top}$ corresponding to the columns in~$B$ has rank~$|B|$.
\end{definition}

Note that $z$ is dual basic and feasible (i.e., $z \geq 0$) if and
only if $z$ is an extreme point of the dual polyhedron to any
lower-level problem.

\begin{theorem}
  \label{thm:main-thm}
  Let~$\hat{x} \in \R^{n_x}$, $\hat{y} \in \R^{n_y}$, and $\hat{z} \in \R^{\ell}$
  be such that
  \begin{itemize}
  \item[(i)] $A\hat{x} \geq a - \myeps e_m$,
  \item[(ii)] $D\hat{y} \geq b - C\hat{x} - \myeps e_{\ell}$,
  \item[(iii)] $\| D^\top \hat{z} - d\|_{\infty} \leq \myeps$,
    $\hat{z} \geq -\myeps e_\ell$,
  \item[(iv)] $d^\top \hat{y} - (b - C\hat{x})^\top \hat{z} \leq
    \myeps$,
  \item[(v)] $\| \hat{z} - \tilde z\|_\infty \leq \myeps$ for some
    dual basic  $\tilde z$.
  \end{itemize}
  Then, there exists a pair~$(x^*,y^*)$ that is feasible for the
  bilevel problem~\eqref{eq:linearlinear}
  such that
  \begin{align*}
    \| (x^*,y^*)^\top - (\hat{x},\hat{y})^\top \|_\infty
    & \leq \myeps \revThird{\kappa_5(A,C,D,a,b,d)},
    \\
    |c_x^\top x^* + c_y^\top y^* - (c_x^\top \hat{x} +
    c_y^\top \hat{y}) |
    & \leq \myeps \revThird{\kappa_6(A,C,D,a,b,c,d)}
  \end{align*}
  hold for certain constants~$\kappa_5(A,C,D,a,b,d)$ and
  \revThird{$\kappa_6(A,C,D,a,b,c,d) > 0$}, whose sizes
  are polynomial in the size of the input data.
\end{theorem}
\begin{proof}
  By Assumptions~\revThird{(i) and~(ii)},
  the pair~$(\hat{x},\hat{y}) \in \R^{n_x} \times \R^{n_y}$
  is nearly feasible for \revThird{the upper- and the lower-level
  problem of~\eqref{eq:linearlinear}}.
  Applying Part~(b) of
  Lemma~\ref{thm:sensitivity-lemma} to~$(\hat{x},\hat{y})$
  and the system
  \begin{equation} \label{eq:fullsystem}
    \begin{bmatrix}
      A & 0\\
      C & D
    \end{bmatrix}
    \begin{pmatrix}
      x\\
      y
    \end{pmatrix}
    \geq
    \begin{pmatrix}
      a\\
      b
    \end{pmatrix}
  \end{equation}
  yields~$(x^*,y')$ with~$Ax^* \geq a$, $Dy' \geq b - Cx^*$,
  and
  \begin{equation} \label{eq:dist-bilevel-feas-pair}
    \| (x^*,y')^\top - (\hat{x},\hat{y})^\top\|_\infty
    \leq \myeps \revThird{\kappa_7(A,C,D)},
  \end{equation}
  where~\revThird{$\kappa_7(A,C,D)$} is the $\kappa$-constant for
  System~\eqref{eq:fullsystem}.
  Next, we use~(iv) and obtain
  \begin{equation}
    \label{eq:longone}
    \begin{split}
      & \ d^\top y' - (b - Cx^*)^\top \hat{z}
      \\
      \leq & \ \myeps + d^\top (y' - \hat{y})
      - \revThird{\left(C(\hat{x} - x^*)\right) ^\top \hat{z}}
    \\
      \leq & \ \myeps + \Abs{d^\top (\hat{y} - y')}
      + \revThird{\Abs{\left(C(\hat{x} - x^*)\right)^\top \hat{z}}}.
    \end{split}
  \end{equation}
  Note that, if $u,\, v \in \R^n$, then $|u^\top v| \le \sum_{j
    = 1}^n |u_j v_j| \le  n \|u\|_\infty \|v\|_\infty$ holds.
  Moreover, if \mbox{$Q \in \R^{m \times n}$} and $u \in \R^n$, then, for $1
  \le i \le m$,  $| (Q u)_i| \le (\sum_{j = 1}^n |q_{ij}|) \| u
  \|_\infty \, \le \, \| Q \|_\infty \| u \|_\infty$ by definition of
  the infinity-norm of a matrix.
  Hence, using \eqref{eq:longone} we obtain
  \begin{equation}
    \label{eq:norms}
    d^\top y' - (b - Cx^*)^\top \hat z
    \leq  \ \myeps + n_y \|d\|_\infty \|\hat{y}-y'\|_\infty
      + \ell \|C\|_\infty \|\hat{x}-x^*\|_\infty \|\hat{z}\|_\infty.
  \end{equation}
  Further, since $\tilde z$ is dual basic, $\| \tilde z \|_{\infty}$ is upper
  bounded by \revThird{$\kappa(D)\|d\|_\infty$} due to Theorem~\ref{thm:sizesours}.
  Thus, using (v) yields $ \| \hat{z}  \|_\infty \leq \myeps +
  \revThird{\kappa(D)\|d\|_\infty}$.
  These facts, together with \eqref{eq:dist-bilevel-feas-pair} and
  \eqref{eq:norms}, yield
  \begin{equation*}
    d^\top y' - (b - Cx^*)^\top \hat z
    \leq
    \myeps \revThird{\kappa_8(A,C,D,d)},
  \end{equation*}
  with \revThird{$\kappa_8(A,C,D,d) \geq 1$} being appropriately
  defined and of polynomial size.
  To sum up, $x^*$, $y'$, and $\hat z$ satisfy
  \begin{itemize}
  \item[(a)] $A x^* \geq a$,
  \item[(b)] $D y' \geq b - C x^*$,
  \item[(c)] $\| D^\top \hat z - d\|_{\infty} \leq \myeps$,
    $\hat{z} \geq -\myeps e_\ell$,
  \item[(d)] $d^\top y' - (b - C x^*)^\top \hat z \leq
    \myeps \revThird{\kappa_8(A,C,D,d)}$.
  \end{itemize}
  Thus, by Lemma~\ref{thm:bilevel-stability-lemma} applied to the
  error \revThird{$\myeps \kappa_8(A,C,D,d) \geq \myeps$},
  there exists an optimal
  solution~$y^*$ for the~$x^*$-parameterized lower-level
  problem~\eqref{eq:linear-lower-level} such that
  \begin{equation}
    \label{eq:wearedone2}
    \| y^* - y'\|_\infty \leq  \myeps \revThird{\kappa_8(A,C,D,d)}
    \kappa_4(A,C,D,a,b,d)
  \end{equation}
  holds.
  Using this inequality and \eqref{eq:dist-bilevel-feas-pair}
  concludes the proof.
  \ifSubmission
  \qed
  \fi
\end{proof}

\begin{remark}
  \label{rem:basic}
  Assumption~(v) in Theorem~\ref{thm:main-thm} states that the
  distance between a nearly feasible and nearly optimal solution for
  the dual of the lower-level problem and a basic solution for the dual
  is small,
  which is a reasonable assumption in our setting.
\end{remark}

To summarize the statement of the theorem, the distance to feasibility
and the superoptimality of a nearly feasible pair~$(\hat{x}, \hat{y})$
for the bilevel problem~\eqref{eq:linearlinear} is linear in~$\myeps$
with coefficients~$\kappa$ that have polynomial size in the input data.
This type of guarantee with polynomial sized coefficients is simply
unavailable in the nonlinear case as we have seen in the previous
sections.


\section{Conclusion}
\label{sec:conclusion}

In this paper, we consider an exemplary bilevel problem with
continuous variables and a nonconvex lower-level problem and
illustrate that numerically obtained solutions can be arbitrarily far
away from an exact solution.
\revFirst{The discrepancy between exact and numerically computed
  solutions is based on the fact that we cannot exactly satisfy}
all constraints of the nonconvex lower level when
using global optimization techniques such as spatial branching.
The considered problem itself is well-posed in the sense that we
do not use large constraint coefficient ranges or
high-degree polynomials.
Moreover, we show that the constraint set of the lower-level problem
is convex, compact, and that it satisfies Slater's constraint
qualification.
In an exact sense, we prove that the lower-level problem as well as
the overall bilevel problem possess unique solutions.
\revSec{It is further established that LICQ holds in every follower's
  solution for every feasible leader's decision.}
While working computationally, however, we can only expect to
obtain~$\myeps$-feasible solutions of the nonconvex lower-level problem.
Furthermore, the set of~$\myeps$-feasible follower solutions is
not a singleton anymore.
Thus, we determine both an optimal solution for the optimistic and
the pessimistic variant of the bilevel problem.
By doing so, we establish that not only the
obtained~$\myeps$-feasible bilevel solutions can be arbitrarily
far away from the overall exact bilevel solution
but that there can also be an arbitrarily large error in the
objective function value of the leader.
\if0 
Even though the considered bilevel problem satisfies the classic
assumptions of stability analysis (see, \eg, Chapter~7.2 in
\textcite{Dempe:2002}), classic stability results do not fit the
considered computational setting since exact global solutions are
simply not available.
\fi 

We also show that the pathological behavior observed for nonlinear
lower-level problems seems to be due to the nonlinearities by showing
that linear bilevel problems behave better at least on the level of
feasible points.
\revFirst{As an important question for future research, it is still
  open if one can prove that the bad behavior can also not appear
  for more general problems than linear ones, such as convex
  problems, in the lower level.}

Finally, our results show that computational bilevel optimization with
continuous but nonconvex lower levels needs to be done with great care
and that ex-post checks may be needed to avoid considering arbitrarily
bad points as ``solutions'' of the given bilevel problem.


\ifPreprint
\section*{Acknowledgements}
\fi

The second author is grateful for the funding received based
on an ARPA-E GO competition award.
Moreover, the third author thanks the Deutsche Forschungsgemeinschaft for their
support within projects A05 and B08 in the
Sonderforschungsbereich/Transregio 154 ``Mathematical
  Modelling, Simulation and Optimization using the Example of Gas
  Networks''.


\printbibliography

@book{Conforti2014,
  doi = {10.1007/978-3-319-11008-0},
  year = {2014},
  publisher = {Springer International Publishing},
  author = {Michele Conforti and G{\'{e}}rard Cornu{\'{e}}jols and Giacomo Zambelli},
  title = {Integer Programming}
}

@Book{Schrijver86,
 author = {Alexander Schrijver},
 title = {Theory of Linear and Integer Programming},
 publisher = {John Wiley \& Sons, Chichester},
 year = {1986},
}

@inproceedings{Bienstock_et_al:2021,
  author =	 {Daniel Bienstock and Alberto Del Pia and Robert Hildebrand},
  editor =	 {Singh, M. and Williamson, D. P.},
  publisher =	 {Springer, Cham.},
  booktitle =	 {Integer Programming and Combinatorial Optimization},
  series =	 {IPCO 2021},
  volume =	 {12707},
  title =	 {Complexity, Exactness, and Rationality in Polynomial
                  Optimization},
  year =	 {2021},
  pages =	 {58--72},
  doi =		 {10.1007/978-3-030-73879-2_5},
}

@book{Dempe:2002,
  author    = {Dempe, Stephan},
  publisher = {Springer},
  year      = {2002},
  doi       = {10.1007/b101970},
  title     = {Foundations of Bilevel Programming},
}

@article{Hansen-et-al:1992,
  author       = {Hansen, Pierre and Jaumard, Brigitte and Savard, Gilles},
  publisher    = {SIAM},
  year         = {1992},
  doi          = {10.1137/0913069},
  journal      = {{SIAM} Journal on Scientific and Statistical Computing},
  number       = {5},
  pages        = {1194--1217},
  title        = {New branch-and-bound rules for linear bilevel programming},
  volume       = {13},
}

@article{Fischetti-et-al:2017,
  author       = {Fischetti, Matteo and Ljubić, Ivana and Monaci, Michele and Sinnl, Markus},
  publisher    = {INFORMS},
  year         = {2017},
  doi          = {10.1287/opre.2017.1650},
  journal      = {Operations Research},
  number       = {6},
  pages        = {1615--1637},
  title        = {A New General-Purpose Algorithm for Mixed-Integer Bilevel Linear Programs},
  volume       = {65},
}

@article{Fischetti-et-al:2018b,
  author       = {Fischetti, Matteo and Ljubić, Ivana and Monaci, Michele and Sinnl, Markus},
  year         = {2018},
  doi          = {10.1007/s10107-017-1189-5},
  journal      = {Mathematical Programming},
  number       = {1-2},
  pages        = {77--103},
  title        = {On the use of intersection cuts for bilevel optimization},
  volume       = {172},
}

@article{Kleinert_et_al:2021b,
  title    = {Outer Approximation for Global Optimization of Mixed-Integer Quadratic Bilevel Problems},
  author   = {Thomas Kleinert and Veronika Grimm and Martin Schmidt},
  year     = 2021,
  journal  = {Mathematical Programming (Series B)},
  doi      = {10.1007/s10107-020-01601-2},
  url-opto = {http://www.optimization-online.org/DB_HTML/2019/12/7534.html},
  url-opus = {https://opus4.kobv.de/opus4-trr154/frontdoor/index/index/docId/302},
}

@inbook{Burtscheidt-Claus:2020,
  author    = {Burtscheidt, Johanna and Claus, Matthias},
  editor    = {Dempe, Stephan and Zemkoho, Alain},
  publisher = {Springer International Publishing},
  booktitle = {Bilevel Optimization: Advances and Next Challenges},
  year      = {2020},
  doi       = {10.1007/978-3-030-52119-6_17},
  pages     = {485--511},
  title     = {Bilevel Linear Optimization Under Uncertainty},
}

@article{Buchheim_Henke:2022,
  title =	 {The robust bilevel continuous knapsack problem with
                  uncertain coefficients in the follower’s objective},
  author =	 {Christoph Buchheim and Dorothee Henke},
  year =	 {2022},
  journal =	 {Journal of Global Optimization},
  doi =		 {10.1007/s10898-021-01117-9},
}

@article{Mitsos-et-al:2008,
  author       = {Mitsos, Alexander and Lemonidis, Panayiotis and Barton, Paul I},
  publisher    = {Springer},
  year         = {2008},
  doi          = {10.1007/s10898-007-9260-z},
  journal      = {Journal of Global Optimization},
  number       = {4},
  pages        = {475--513},
  title        = {Global solution of bilevel programs with a nonconvex inner program},
  volume       = {42},
}

@article{Mitsos:2010,
  author       = {Mitsos, Alexander},
  publisher    = {Springer},
  year         = {2010},
  doi          = {10.1007/s10898-009-9479-y},
  journal      = {Journal of Global Optimization},
  number       = {4},
  pages        = {557--582},
  title        = {Global solution of nonlinear mixed-integer bilevel programs},
  volume       = {47},
}

@incollection{Kleniati-Adjiman:2011,
  author    = {Kleniati, Polyxeni-M. and Adjiman, Claire S.},
  editor    = {Pistikopoulos, E.N. and Georgiadis, M.C. and Kokossis, A.C.},
  publisher = {Elsevier},
  booktitle = {21st European Symposium on Computer Aided Process Engineering},
  year      = {2011},
  doi       = {10.1016/B978-0-444-53711-9.50121-8},
  issn      = {1570-7946},
  pages     = {602--606},
  series    = {Computer Aided Chemical Engineering},
  title     = {Branch-and-Sandwich: An Algorithm for Optimistic Bi-Level Programming Problems},
  volume    = {29},
}

@article{Kleniati-Adjiman:2014a,
  author       = {Kleniati, Polyxeni-M. and Adjiman, Claire S.},
  publisher    = {Springer},
  year         = {2014},
  doi          = {10.1007/s10898-013-0121-7},
  journal      = {Journal of Global Optimization},
  number       = {3},
  pages        = {425--458},
  title        = {Branch-and-Sandwich: a deterministic global optimization algorithm for optimistic bilevel programming problems. Part I: Theoretical development},
  volume       = {60},
}

@Article{Kleniati-Adjiman:2014b,
  author       = {Kleniati, Polyxeni-M. and Adjiman, Claire S.},
  title	       = {Branch-and-Sandwich: a deterministic global
                   optimization algorithm for optimistic bilevel
                   programming problems. Part II: Convergence analysis
                   and numerical results},
  journal      = {Journal of Global Optimization},
  year	       = 2014,
  volume       = 60,
  number       = 3,
  pages	       = {459-481},
  doi	       = {10.1007/s10898-013-0120-8},
  publisher    = {Springer}
}

@article{Kleniati-Adjiman:2015,
  author       = {Kleniati, Polyxeni-M. and Adjiman, Claire S.},
  year         = {2015},
  doi          = {10.1016/j.compchemeng.2014.06.004},
  journal      = {Computers \& Chemical Engineering},
  pages        = {373--386},
  title        = {A generalization of the Branch-and-Sandwich algorithm: From continuous to mixed-integer nonlinear bilevel problems},
  volume       = {72},
}

@incollection{Paulavicius-et-al:2016,
  author    = {Paulavičius, Remigijus and Kleniati, Polyxeni-M. and Adjiman, Claire S.},
  editor    = {Kravanja, Zdravko and Bogataj, Miloš},
  publisher = {Elsevier},
  booktitle = {26th European Symposium on Computer Aided Process Engineering},
  year      = {2016},
  doi       = {10.1016/B978-0-444-63428-3.50334-9},
  issn      = {1570-7946},
  pages     = {1977--1982},
  series    = {Computer Aided Chemical Engineering},
  title     = {Global optimization of nonconvex bilevel problems: implementation and computational study of the Branch-and-Sandwich algorithm},
  volume    = {38},
}

@article{Paulavivcius-et-al:2020,
  author       = {Paulavi{č}ius, Remigijus and Gao, J and Kleniati, Polyxeni-M and Adjiman, CS},
  publisher    = {Elsevier},
  year         = {2020},
  doi          = {10.1016/j.compchemeng.2019.106609},
  journal      = {Computers \& Chemical Engineering},
  pages        = {106609},
  title        = {BASBL: Branch-And-Sandwich BiLevel solver. Implementation and computational study with the BASBLib test set},
}

@article{Paulavicius-Adjiman:2020,
  author       = {Paulavi{č}ius, Remigijus and Adjiman, Claire S},
  publisher    = {Springer},
  year         = {2020},
  doi          = {10.1007/s10898-020-00874-3},
  journal      = {Journal of Global Optimization},
  pages        = {1--29},
  title        = {New bounding schemes and algorithmic options for the Branch-and-Sandwich algorithm},
}

@article{Kleinert_et_al:2021c,
  title    = {A Survey on Mixed-Integer Programming Techniques in Bilevel Optimization},
  author   = {Thomas Kleinert and Martine Labbé and Ivana Ljubi\'c and Martin Schmidt},
  year     = 2021,
  journal  = {EURO Journal on Computational Optimization},
  doi      = {10.1016/j.ejco.2021.100007},
  url-opto = {http://www.optimization-online.org/DB_HTML/2021/01/8187.html},
  url-opus = {https://opus4.kobv.de/opus4-trr154/frontdoor/index/index/docId/361},
}

@book{Locatelli_Schoen:2013,
author = {Locatelli, Marco and Schoen, Fabio},
title = {Global Optimization},
publisher = {Society for Industrial and Applied Mathematics},
year = {2013},
doi = {10.1137/1.9781611972672},
address = {Philadelphia, PA},
}

@article{Kleinert_et_al:2020,
  author     = {Thomas Kleinert and Martine Labbé and Fränk Plein and Martin Schmidt},
  title      = {There's No Free Lunch: On the Hardness of Choosing a Correct Big-M in Bilevel Optimization},
  journal    = {Operations Research},
  year       = {2020},
  doi        = {10.1287/opre.2019.1944},
  pages      = {1716--1721},
  volume     = {68},
  number     = {6},
}

@book{Bertsekas:2016,
  title =	 {Nonlinear Programming},
  author =	 {Bertsekas, Dimitri P.},
  year =	 {2016},
  publisher =	 {Athena scientific Belmont},
  isbn =	 {978-1-886529-05-2},
}

@book{Chvatal:1983,
  title = {Linear Programming},
  author = {Chvátal, Vasek},
  series = {A Series of books in the mathematical sciences},
  publisher = {Freeman},
  address = {New York (N. Y.)},
  isbn = {0-7167-1195-8},
  year = {1983},
}

@article{Applegate_et_al:2007,
  title   = {Exact solutions to linear programming problems},
  journal = {Operations Research Letters},
  volume  = {35},
  number  = {6},
  pages   = {693--699},
  year    = {2007},
  doi     = {10.1016/j.orl.2006.12.010},
  author  = {David L. Applegate and William Cook and Sanjeeb Dash and Daniel G. Espinoza},
}

@article{Beck_et_al:2022,
  title    = {A Survey on Bilevel Optimization Under Uncertainty},
  author   = {Yasmine Beck and Ivana Ljubi\'c and Martin Schmidt},
  year     = {2023},
  journal  = {European Journal on Operational Research},
  doi = {10.1016/j.ejor.2023.01.008},
}

\appendix
\section{Proof of the Linear Independence Constraint Qualification}
\label{sec:appendix-licq}

Let~$(x_1,x_2) \in [\underbar{$x$}_1,\bar{x}_1] \times [\underbar{$x$}_2,\bar{x}_2]$
with~$1 \leq \underbar{$x$}_i < \bar{x}_i$, $i \in \set{1,2}$, be arbitrary but fixed.
Further, let~$y^*$ be the exact optimal solution of the follower for the given
leader's decision~$x$.
As shown in Section~\ref{sec:exact}, a follower's solution~$y^*$ satisfies~$y^*_i > 0$
for all~$i \in \set{1,\ldots,n+2}$.
This means that the non-negativity constraints~\eqref{eq:non-neg} as well as
the lower bound constraints in~\eqref{eq:var-bounds-1} and~\eqref{eq:var-bounds-2}
are inactive in an optimal follower's decision.
Conversely, all quadratic constraints~\eqref{eq:quadr-constr}
as well as the upper bound constraints in~\eqref{eq:var-bounds-1}
and~\eqref{eq:var-bounds-2} are active.
Hence, the Jacobian matrix of the single equality constraint and the
active inequality constraints in an optimal decision of the follower is given by
\begin{equation*}
  \begin{bmatrix}
    1 & & & & & 1 & & \\
    2y^*_1 & -1 \\
    & 2y^*_2 & -1 \\
    & & \ddots & \ddots \\
    & & & 2y^*_{n-2} & -1 \\
    & & & & 2y^*_{n-1} & -1 \\
    & & & & & & 1 \\
    & & & & & & & 1 \\
  \end{bmatrix}.
\end{equation*}
All matrix entries that are left blank here correspond to zeros.
It is easy to verify that the Jacobian matrix has full rank, \ie,
the linear independence constraint qualification holds.


\section{Proof of the Strict Complementarity Condition}
\label{sec:appendix}

Let~$(x_1,x_2) \in [\underbar{$x$}_1,\bar{x}_1] \times [\underbar{$x$}_2,\bar{x}_2]$
with~$1 \leq \underbar{$x$}_i < \bar{x}_i$, $i \in \set{1,2}$, be arbitrary but fixed.
For the~$x$-parameterized lower-level problem~\eqref{eq:LL-prob}, the Lagrangian function
reads
\begin{align*}
  \mathcal{L}(y, \alpha, \beta, \gamma, \delta^\pm, \pi)
  = & -y_1 + y_n \left(x_1 + x_2 - y_{n+1} - y_{n+2} \right)\\
    &  - \sum_{i=1}^{n-1} \alpha_i \left( y_{i+1} - y^2_i \right)
      - \sum_{i=1}^{n+1} \beta_iy_i\\
    & - \gamma \left( x_1 - y_{n+1} \right)
      - \delta^- \left( y_{n+2} + x_2 \right)\\
    & - \delta^+ \left( x_2 - y_{n+2} \right)
      - \pi \left( y_1 + y_n - \frac{1}{2} \right)
\end{align*}
with the \revSec{Lagrange} multipliers~$\alpha \in \R^{n-1}_{\geq 0}$,
$\beta \in \R^{n+1}_{\geq 0}$, $\gamma,\, \delta^\pm \in \R_{\geq 0}$,
and~$\pi \in \R$.
The KKT complementarity conditions of Problem~\eqref{eq:LL-prob} are
given by
\begin{subequations}
  \begin{align}
    \alpha_i \left( y_{i+1} - y^2_i \right) & = 0, \quad i \in \Set{1,\ldots,n-1},
    \label{eq:complementarity-alpha}\\
    \beta_i y_i & = 0, \quad i \in \Set{1,\ldots,n+1},
    \label{eq:complementarity-beta}\\
    \gamma \left( x_1 - y_{n+1} \right) & = 0,
    \label{eq:complementarity-gamma}\\
    \delta^- \left( y_{n+2} + x_2 \right) & = 0,
    \label{eq:complementarity-delta-}\\
    \delta^+ \left( x_2 - y_{n+2} \right) & = 0.
                                            \label{eq:complementarity-delta+}
  \end{align}
\end{subequations}
Let~$y^*$ be the exact optimal solution of the follower for the given
leader's decision~$x$.
Further, let~$\alpha^*,\,\beta^*,\, \gamma^*,\, (\delta^\pm)^*$,
and~$\pi^*$ be the corresponding \revSec{Lagrange} multipliers so that
$(y^*,\alpha^*,\beta^*,\gamma^*,(\delta^\pm)^*,\pi^*)$ is a KKT point
of the lower-level problem.
As shown in Appendix~\ref{sec:appendix-licq}, the linear independence
constraint qualification is valid at all solutions of the follower’s
problem (for any given leader’s decision $x$). Hence, the \revSec{Lagrange}
multipliers~$\alpha^*,\,\beta^*,\, \gamma^*,\, (\delta^\pm)^*$,
and~$\pi^*$ are uniquely determined.
Also, as observed in Section~\ref{sec:exact}, the follower's optimal
decision~$y^*$ satisfies~$(y^*_{n+1},y^*_{n+2}) = (x_1,x_2)$ and
\begin{equation*}
  y^*_i = \left( y^*_1 \right) ^{2^{i-1}}
  \quad \text{ for all } i \in \set{2,\ldots,n},
\end{equation*}
where~$y^*_1$ is the unique root of the function $h$ as given
in~\eqref{eq:unique-sol}.
We now show that the strict complementarity condition is satisfied.

\begin{observation} \label{obs:beta-delta-}
  The point~$(y^*,\alpha^*,\beta^*,\gamma^*,(\delta^\pm)^*,\pi^*)$
  satisfies~$\beta^*_i = 0$ for all~$i \in \set{1,\ldots,n+1}$
  as well as~$(\delta^-)^* = 0$.
\end{observation}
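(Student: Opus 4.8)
The plan is to show that every constraint associated with a $\beta^*_i$ or with $(\delta^-)^*$ is \emph{strictly} inactive at the exact optimal follower's decision~$y^*$, and then to read off the claim directly from the complementarity conditions~\eqref{eq:complementarity-beta} and~\eqref{eq:complementarity-delta-}. All the structural information needed is already available from Section~\ref{sec:exact}, so the argument amounts to combining strict positivity of the relevant primal quantities with complementary slackness.

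First I would recall that the exact optimal solution satisfies~$y^*_1 \in (0,1/2)$, where~$y^*_1$ is the unique root of the function~$h$ from~\eqref{eq:unique-sol}, together with~$y^*_i = \left(y^*_1\right)^{2^{i-1}}$ for all~$i \in \set{2,\ldots,n}$. Since~$y^*_1 > 0$, the exponentiation formula immediately gives~$y^*_i > 0$ for every~$i \in \set{1,\ldots,n}$, \ie, the non-negativity constraints~\eqref{eq:non-neg} are all strictly inactive. Moreover, $y^*$ satisfies~$(y^*_{n+1},y^*_{n+2}) = (x_1,x_2)$, and because~$x_1 \geq \underbar{$x$}_1 \geq 1 > 0$, we also have~$y^*_{n+1} > 0$. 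Feeding these strict positivity facts into~\eqref{eq:complementarity-beta}, for each~$i \in \set{1,\ldots,n+1}$ we have~$\beta^*_i y^*_i = 0$ with~$y^*_i > 0$, which forces~$\beta^*_i = 0$.

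For the last multiplier I would evaluate the constraint~$y_{n+2} + x_2 \geq 0$ at~$y^*_{n+2} = x_2$, obtaining~$y^*_{n+2} + x_2 = 2x_2 \geq 2 > 0$, so the lower bound on~$y_{n+2}$ is strictly inactive as well. Then~\eqref{eq:complementarity-delta-} reads~$(\delta^-)^* \cdot 2x_2 = 0$, whence~$(\delta^-)^* = 0$. There is no genuine obstacle here once the form of the exact solution from Section~\ref{sec:exact} is in hand; the only point that deserves attention is that the standing assumption~$1 \leq \underbar{$x$}_i$ is used in an essential way, since it is precisely what upgrades~$y^*_{n+1} \geq 0$ and~$y^*_{n+2} + x_2 \geq 0$ to strict inequalities---without it the corresponding constraints could be active and the multipliers would not have to vanish.
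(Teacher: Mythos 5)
Your proof is correct and follows essentially the same route as the paper's: establish strict positivity of $y^*_i$ for $i \in \set{1,\ldots,n}$, of $y^*_{n+1} = x_1 \geq 1$, and of $y^*_{n+2} + x_2 = 2x_2 \geq 2$, and then read off the vanishing multipliers from the complementarity conditions~\eqref{eq:complementarity-beta} and~\eqref{eq:complementarity-delta-}. Your added remark that the assumption $1 \leq \underbar{$x$}_i$ is what makes the relevant constraints strictly inactive is a fair observation but does not change the argument.
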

\begin{proof}
  For all~$i \in \set{1,\ldots,n}$, we have~$y^*_i > 0$.
  By~\eqref{eq:complementarity-beta}, we thus obtain~$\beta^*_i = 0$
  for all~$i \in \set{1,\ldots,n}$.
  From~\eqref{eq:complementarity-beta}, we further obtain~\mbox{$\beta^*_{n+1} = 0$}
  since~$y^*_{n+1} = x_1 \geq \underbar{$x$}_1 \geq 1 > 0$ holds.
  Finally, due to~$y^*_{n+2} + x_2 = 2x_2 \geq 2\underbar{$x$}_2 \geq 2 > 0$,
  \eqref{eq:complementarity-delta-} yields~$(\delta^-)^* = 0$.
\end{proof}

\begin{observation}
  The point~$(y^*,\alpha^*,\beta^*,\gamma^*,(\delta^\pm)^*,\pi^*)$
  satisfies~$\gamma^*,\, (\delta^+)^* > 0$.
\end{observation}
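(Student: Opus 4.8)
The plan is to extract the two relevant stationarity equations of the KKT system and to feed in the multiplier values already pinned down in Observation~\ref{obs:beta-delta-}. The crucial structural fact is that $\gamma$ enters the Lagrangian $\mathcal{L}$ only through the term $-\gamma(x_1 - y_{n+1})$ and $\delta^+$ only through $-\delta^+(x_2 - y_{n+2})$, so that each of these two multipliers shows up in exactly one component of the stationarity condition $\nabla_y \mathcal{L} = 0$ and can be isolated without solving the whole system.

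First I would differentiate $\mathcal{L}$ with respect to $y_{n+1}$. The objective part $y_n(x_1 + x_2 - y_{n+1} - y_{n+2})$ contributes $-y_n$, the term $-\beta_{n+1} y_{n+1}$ contributes $-\beta_{n+1}$, and $-\gamma(x_1 - y_{n+1})$ contributes $+\gamma$; no other summand depends on $y_{n+1}$. Setting this derivative to zero at the KKT point gives
\begin{equation*}
  \gamma^* = y^*_n + \beta^*_{n+1}.
\end{equation*}
By Observation~\ref{obs:beta-delta-} we have $\beta^*_{n+1} = 0$, and $y^*_n > 0$ holds (Result~1), so $\gamma^* = y^*_n > 0$.

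Next I would differentiate $\mathcal{L}$ with respect to $y_{n+2}$, where the objective contributes $-y_n$, the term $-\delta^-(y_{n+2} + x_2)$ contributes $-\delta^-$, and $-\delta^+(x_2 - y_{n+2})$ contributes $+\delta^+$. Stationarity then yields
\begin{equation*}
  (\delta^+)^* = y^*_n + (\delta^-)^*,
\end{equation*}
and since $(\delta^-)^* = 0$ by Observation~\ref{obs:beta-delta-}, we again obtain $(\delta^+)^* = y^*_n > 0$.

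The argument contains no genuine obstacle; the only point requiring care is the sign bookkeeping in the partial derivatives, since $\mathcal{L}$ is written for the minimization of $-f$ with nonnegative multipliers attached to the inequalities in their $\geq$ form. Once the two stationarity relations are assembled correctly, positivity of $\gamma^*$ and $(\delta^+)^*$ follows at once from the previously established facts $\beta^*_{n+1} = (\delta^-)^* = 0$ and $y^*_n > 0$. Since the associated primal constraints are active at $y^*$ (indeed $y^*_{n+1} = x_1$ and $y^*_{n+2} = x_2$), this yields strict complementarity for the upper-bound constraints on $y_{n+1}$ and $y_{n+2}$, which is the part of the strict complementarity condition addressed by the present observation.
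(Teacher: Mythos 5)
Your proposal is correct and follows essentially the same route as the paper: both isolate $\gamma^*$ and $(\delta^+)^*$ from the stationarity conditions $\nabla_{y_{n+1}}\mathcal{L} = -y^*_n - \beta^*_{n+1} + \gamma^* = 0$ and $\nabla_{y_{n+2}}\mathcal{L} = -y^*_n - (\delta^-)^* + (\delta^+)^* = 0$, and then invoke $\beta^*_{n+1} = (\delta^-)^* = 0$ together with $y^*_n > 0$. The sign bookkeeping in your partial derivatives matches the paper's Lagrangian exactly.
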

\begin{proof}
  Since $(y^*,\alpha^*,\beta^*,\gamma^*,(\delta^\pm)^*,\pi^*)$ is a
  KKT point,
  \begin{align*}
    & \nabla_{y_{n+1}} \mathcal{L} (y^*,\alpha^*,\beta^*,\gamma^*,(\delta^\pm)^*,\pi^*)
      = -y^*_n - \beta^*_{n+1} + \gamma^* = 0,
    \\
    & \nabla_{y_{n+2}} \mathcal{L} (y^*,\alpha^*,\beta^*,\gamma^*,(\delta^\pm)^*,\pi^*)
      = -y^*_n - (\delta^-)^* + (\delta^+)^* = 0
  \end{align*}
  are satisfied. By Observation~\ref{obs:beta-delta-} and~$y^*_n > 0$,
  we obtain~$\gamma^*,\, (\delta^+)^* > 0$.
\end{proof}

\begin{observation} \label{obs:alpha-1-pi}
  The point~$(y^*,\alpha^*,\beta^*,\gamma^*,(\delta^\pm)^*,\pi^*)$
  satisfies~$2\alpha^*_1y^*_1 = 1 + \pi^*$.
\end{observation}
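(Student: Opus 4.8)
The plan is to read off the stationarity condition of the KKT system in the single coordinate $y_1$ and then to combine it with the multiplier information already obtained in Observation~\ref{obs:beta-delta-}. Since $(y^*,\alpha^*,\beta^*,\gamma^*,(\delta^\pm)^*,\pi^*)$ is a KKT point, the gradient of the Lagrangian $\mathcal{L}$ with respect to $y$ vanishes; in particular, $\nabla_{y_1}\mathcal{L} = 0$ holds at this point. First I would isolate exactly those summands of $\mathcal{L}$ that depend on~$y_1$.

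The variable $y_1$ appears in four places: in the objective part $-y_1$, in the single quadratic-constraint term $-\alpha_1(y_2 - y_1^2)$, in the nonnegativity term $-\beta_1 y_1$, and in the equality-constraint term $-\pi(y_1 + y_n - 1/2)$. Differentiating, the first contributes $-1$, the second contributes $+2\alpha_1 y_1$ (this is the only nontrivial derivative and the one to treat with care, as it is where the factor $2\alpha^*_1 y^*_1$ in the claim originates), the third contributes $-\beta_1$, and the fourth contributes $-\pi$. Evaluating at the KKT point therefore gives
\begin{equation*}
  \nabla_{y_1}\mathcal{L}(y^*,\alpha^*,\beta^*,\gamma^*,(\delta^\pm)^*,\pi^*)
  = -1 + 2\alpha^*_1 y^*_1 - \beta^*_1 - \pi^* = 0.
\end{equation*}

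It then remains to eliminate $\beta^*_1$. By Observation~\ref{obs:beta-delta-}, we have $\beta^*_i = 0$ for all $i \in \set{1,\ldots,n+1}$, so in particular $\beta^*_1 = 0$. Substituting this into the displayed stationarity equation and rearranging yields $2\alpha^*_1 y^*_1 = 1 + \pi^*$, which is the claim. I do not expect any genuine obstacle here: the argument is a one-line differentiation followed by an appeal to the already-established vanishing of $\beta^*_1$. The only point requiring attention is the bookkeeping of the sign conventions in $\mathcal{L}$ (which encodes the minimization form $-f$ of the follower's maximization problem), so that the coefficient of $y^*_1$ in the quadratic term comes out as $+2\alpha^*_1$ rather than with the opposite sign.
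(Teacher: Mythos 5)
Your proposal is correct and is essentially identical to the paper's own proof: both write out the stationarity condition $\nabla_{y_1}\mathcal{L} = -1 + 2\alpha^*_1 y^*_1 - \beta^*_1 - \pi^* = 0$ and then invoke Observation~\ref{obs:beta-delta-} to set $\beta^*_1 = 0$. Your extra care with the sign conventions in the Lagrangian is sound but does not change the argument.
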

\begin{proof}
  Since $(y^*,\alpha^*,\beta^*,\gamma^*,(\delta^\pm)^*,\pi^*)$ is a
  KKT point,
  \begin{equation*}
    \nabla_{y_1} \mathcal{L} (y^*,\alpha^*,\beta^*,\gamma^*,(\delta^\pm)^*,\pi^*)
    = -1 + 2\alpha^*_1 y^*_1 - \beta^*_1 - \pi^* = 0
  \end{equation*}
  is satisfied. Since, by Observation~\ref{obs:beta-delta-}, we have~$\beta^*_1 = 0$,
  we obtain~$2\alpha^*_1y^*_1 = 1 + \pi^*$.
\end{proof}

\begin{observation} \label{obs:alpha-i-alpha-i-1}
  The point~$(y^*,\alpha^*,\beta^*,\gamma^*,(\delta^\pm)^*,\pi^*)$
  satisfies~$2\alpha^*_iy^*_i = \alpha^*_{i-1}$ for all~$i \in \set{2,\ldots,n-1}$.
\end{observation}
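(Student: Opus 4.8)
The plan is to mirror exactly the strategy used in the three preceding observations: extract the KKT stationarity condition for the appropriate block of follower variables and then simplify it using what has already been established. Here the relevant variables are the interior coordinates $y_i$ with $i \in \set{2,\ldots,n-1}$, so the first step is to write down $\nabla_{y_i}\mathcal{L}$ at the KKT point $(y^*,\alpha^*,\beta^*,\gamma^*,(\delta^\pm)^*,\pi^*)$ and set it equal to zero.

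The only point that genuinely requires care is the bookkeeping of how an interior variable $y_i$ enters the Lagrangian. Unlike $y_1$, $y_n$, $y_{n+1}$, and $y_{n+2}$, which each meet only one or two special terms, every interior $y_i$ appears in \emph{two} consecutive quadratic constraints of~\eqref{eq:quadr-constr}: quadratically in the constraint $y_i^2 \leq y_{i+1}$ (multiplier $\alpha_i$) and linearly in the constraint $y_{i-1}^2 \leq y_i$ (multiplier $\alpha_{i-1}$), in addition to the single non-negativity multiplier $\beta_i$. Differentiating the corresponding Lagrangian terms $-\alpha_i(y_{i+1}-y_i^2)$, $-\alpha_{i-1}(y_i - y_{i-1}^2)$, and $-\beta_i y_i$ with respect to $y_i$ and collecting contributions gives the stationarity relation $\nabla_{y_i}\mathcal{L} = 2\alpha^*_i y^*_i - \alpha^*_{i-1} - \beta^*_i = 0$, where no other term of $\mathcal{L}$ contributes since $2 \leq i \leq n-1$ excludes the indices $1$ and $n$.

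To finish, I would invoke Observation~\ref{obs:beta-delta-}, which already furnishes $\beta^*_i = 0$ for every $i \in \set{1,\ldots,n+1}$, so the relation collapses to $2\alpha^*_i y^*_i = \alpha^*_{i-1}$ for all $i \in \set{2,\ldots,n-1}$, as claimed. There is no substantive obstacle here: the assertion is purely a stationarity identity, and the one thing that could go wrong is misindexing the two $\alpha$-constraints that share the variable $y_i$ (which one contributes the linear $-\alpha_{i-1}$ and which contributes the quadratic $2\alpha^*_i y^*_i$). This recursion, read together with Observation~\ref{obs:alpha-1-pi} and the positivity $y^*_i > 0$ established in Section~\ref{sec:exact}, is precisely the tool that will later propagate strict positivity of the multipliers $\alpha^*_i$ down the chain and thereby complete the verification of strict complementarity for the active quadratic constraints.
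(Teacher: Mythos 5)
Your proposal is correct and coincides with the paper's own proof: both write the stationarity condition $\nabla_{y_i}\mathcal{L} = -\alpha^*_{i-1} + 2\alpha^*_i y^*_i - \beta^*_i = 0$ for $i \in \set{2,\ldots,n-1}$ and then apply Observation~\ref{obs:beta-delta-} to drop $\beta^*_i$. Your extra care in tracking that each interior $y_i$ appears in two consecutive constraints of~\eqref{eq:quadr-constr} is exactly the right bookkeeping and matches the signs in the paper.
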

\begin{proof}
  Since $(y^*,\alpha^*,\beta^*,\gamma^*,(\delta^\pm)^*,\pi^*)$ is a
  KKT point,
  \begin{equation*}
    \nabla_{y_i} \mathcal{L} (y^*,\alpha^*,\beta^*,\gamma^*,(\delta^\pm)^*,\pi^*)
    = -\alpha^*_{i-1} + 2\alpha^*_iy^*_i - \beta^*_i = 0
  \end{equation*}
  is satisfied for all~$i \in \set{2,\ldots,n-1}$.
  By Observation~\ref{obs:beta-delta-}, we obtain~$2\alpha^*_iy^*_i =
  \alpha^*_{i-1}$ for all~$i \in \set{2,\ldots,n-1}$.
\end{proof}

\begin{observation} \label{obs:alpha-n-1-pi}
  The point~$(y^*,\alpha^*,\beta^*,\gamma^*,(\delta^\pm)^*,\pi^*)$
  satisfies~$\pi^* = - \alpha^*_{n-1}$.
\end{observation}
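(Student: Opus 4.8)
The plan is to proceed exactly as in Observations~\ref{obs:alpha-1-pi}--\ref{obs:alpha-i-alpha-i-1}, namely by exploiting the stationarity condition of the KKT system, but now differentiating the Lagrangian with respect to the variable~$y_n$. First I would compute~$\nabla_{y_n}\mathcal{L}$ and carefully collect the terms in which~$y_n$ actually appears. The bilinear term~$y_n(x_1 + x_2 - y_{n+1} - y_{n+2})$ contributes the factor~$(x_1 + x_2 - y_{n+1} - y_{n+2})$; the quadratic-constraint sum~$-\sum_{i=1}^{n-1}\alpha_i(y_{i+1} - y_i^2)$ contributes~$-\alpha_{n-1}$, since~$y_n$ occurs only as the linear term~$y_{i+1}$ with~$i = n-1$ (it never appears squared, as the index~$i$ in~$y_i^2$ runs only up to~$n-1$); the non-negativity term~$-\sum_{i=1}^{n+1}\beta_i y_i$ contributes~$-\beta_n$; and the equality-constraint term~$-\pi(y_1 + y_n - 1/2)$ contributes~$-\pi$. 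Hence the stationarity condition reads
\begin{equation*}
  \nabla_{y_n}\mathcal{L}(y^*,\alpha^*,\beta^*,\gamma^*,(\delta^\pm)^*,\pi^*)
  = (x_1 + x_2 - y^*_{n+1} - y^*_{n+2}) - \alpha^*_{n-1} - \beta^*_n - \pi^* = 0.
\end{equation*}

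Next I would substitute the two facts established earlier. From the analysis in Section~\ref{sec:exact} we know that the optimal follower's decision satisfies~$(y^*_{n+1},y^*_{n+2}) = (x_1,x_2)$, so the bilinear contribution vanishes: $x_1 + x_2 - y^*_{n+1} - y^*_{n+2} = 0$. Moreover, Observation~\ref{obs:beta-delta-} gives~$\beta^*_i = 0$ for all~$i \in \set{1,\ldots,n+1}$, in particular~$\beta^*_n = 0$. Inserting both into the stationarity equation leaves~$-\alpha^*_{n-1} - \pi^* = 0$, which rearranges to the claimed identity~$\pi^* = -\alpha^*_{n-1}$.

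I do not expect any genuine obstacle here: the argument is a direct gradient computation of the same flavor as the preceding observations, and the only point requiring a little care is the bookkeeping of which terms depend on~$y_n$---specifically noticing that~$y_n$ enters the quadratic-constraint sum only linearly (through~$y_{i+1}$ at~$i=n-1$) and not through any squared term, so that the coefficient picked up is exactly~$-\alpha^*_{n-1}$ rather than something involving~$y^*_{n-1}$ or~$\alpha^*_n$.
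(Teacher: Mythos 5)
Your proposal is correct and follows exactly the paper's own argument: take the stationarity condition $\nabla_{y_n}\mathcal{L} = (x_1+x_2-y^*_{n+1}-y^*_{n+2}) - \alpha^*_{n-1} - \beta^*_n - \pi^* = 0$, then substitute $(y^*_{n+1},y^*_{n+2})=(x_1,x_2)$ and $\beta^*_n=0$ from Observation~\ref{obs:beta-delta-}. The bookkeeping of which terms involve $y_n$ is also exactly right.
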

\begin{proof}
  Since $(y^*,\alpha^*,\beta^*,\gamma^*,(\delta^\pm)^*,\pi^*)$ is a
  KKT point,
  \begin{equation*}
    \nabla_{y_n} \mathcal{L} (y^*,\alpha^*,\beta^*,\gamma^*,(\delta^\pm)^*,\pi^*)
    = x_1 + x_2 - y^*_{n+1} - y^*_{n+2} - \alpha^*_{n-1} - \beta^*_n - \pi^* = 0
  \end{equation*}
  is satisfied. By Observation~\ref{obs:beta-delta-}, we have~$\beta^*_n = 0$.
  Moreover, $(y^*_{n+1}, y^*_{n+2}) = (x_1,x_2)$ holds.
  Hence, we obtain~$\pi^* = - \alpha^*_{n-1}$.
\end{proof}

\begin{observation}
  The point~$(y^*,\alpha^*,\beta^*,\gamma^*,(\delta^\pm)^*,\pi^*)$
  satisfies~$\alpha^*_i > 0$ for all~$i \in \set{1,\ldots,n-1}$.
\end{observation}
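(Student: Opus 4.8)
The plan is to combine the three stationarity identities from Observations~\ref{obs:alpha-1-pi}, \ref{obs:alpha-i-alpha-i-1}, and~\ref{obs:alpha-n-1-pi} with the multiplier sign constraint~$\alpha^* \in \R^{n-1}_{\geq 0}$ and the strict positivity~$y^*_i > 0$, $i \in \set{1,\ldots,n}$, established in Section~\ref{sec:exact}. Since each~$\alpha^*_i$ is already known to be nonnegative, it suffices to exclude the case~$\alpha^*_i = 0$. The key point to keep in mind is that the sign constraints alone do \emph{not} force strict positivity; it is the interplay of the two boundary identities (those involving~$\pi^*$) that rules out the degenerate all-zero solution.

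First I would note that the recurrence~$2\alpha^*_i y^*_i = \alpha^*_{i-1}$ from Observation~\ref{obs:alpha-i-alpha-i-1}, together with~$y^*_i > 0$, propagates a vanishing multiplier in both directions: for any~$i \in \set{2,\ldots,n-1}$, $\alpha^*_i = 0$ implies~$\alpha^*_{i-1} = 0$, and conversely~$\alpha^*_{i-1} = 0$ implies~$\alpha^*_i = 0$. Hence either all of~$\alpha^*_1,\ldots,\alpha^*_{n-1}$ vanish or none of them does. I would then derive a contradiction in the all-zero case: Observation~\ref{obs:alpha-1-pi} gives~$0 = 2\alpha^*_1 y^*_1 = 1 + \pi^*$, that is~$\pi^* = -1$, whereas Observation~\ref{obs:alpha-n-1-pi} gives~$\pi^* = -\alpha^*_{n-1} = 0$. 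Since~$-1 \neq 0$, no~$\alpha^*_i$ can be zero, and~$\alpha^*_i > 0$ follows.

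A cleaner route, which I would probably present instead, is to solve the system explicitly. Iterating Observation~\ref{obs:alpha-i-alpha-i-1} down from Observation~\ref{obs:alpha-1-pi} yields~$\alpha^*_{n-1} = (1 + \pi^*)/P$ with~$P \define 2^{n-1}\prod_{i=1}^{n-1} y^*_i > 0$, and substituting the boundary condition~$\alpha^*_{n-1} = -\pi^*$ from Observation~\ref{obs:alpha-n-1-pi} leaves the single linear equation~$-\pi^* P = 1 + \pi^*$, whose unique solution is~$\pi^* = -1/(P+1)$. Then~$1 + \pi^* = P/(P+1) > 0$, so~$\alpha^*_1 = (1+\pi^*)/(2y^*_1) > 0$, and the recurrence~$\alpha^*_i = \alpha^*_{i-1}/(2y^*_i)$ propagates positivity to all remaining indices. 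I do not expect a genuine obstacle here; the only step requiring care is the bookkeeping of the two~$\pi^*$-boundary conditions, which is exactly what pins down~$\pi^*$ and excludes the trivial solution permitted by the sign constraints alone.
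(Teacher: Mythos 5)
Your proposal is correct and your first variant is essentially the paper's own argument: assume the multipliers vanish, propagate the zero through the recurrence $2\alpha^*_i y^*_i = \alpha^*_{i-1}$ using $y^*_i > 0$, and derive the contradiction $\pi^* = 0$ versus $\pi^* = -1$ from the two boundary identities. The explicit second variant is a harmless refinement of the same computation that additionally pins down $\pi^* = -1/(P+1)$ with $P = 2^{n-1}\prod_{i=1}^{n-1} y^*_i$, but it rests on the identical identities and adds nothing essential.
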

\begin{proof}
  We show this by contradiction.
  Suppose that~$\alpha^*_1 = 0$ holds.
  By Observation~\ref{obs:alpha-i-alpha-i-1} and~$y^*_i > 0$ for
  all~$i \in \set{1,\ldots,n}$, we obtain~$\alpha^*_i = 0$
  for all~$i \in \set{1,\ldots,n-1}$.
  Then, Observation~\ref{obs:alpha-n-1-pi}
  yields~$\pi^* = -\alpha^*_{n-1} = 0$.
  By Observation~\ref{obs:alpha-1-pi}, however, we
  obtain~\mbox{$0 = 2\alpha^*_1y^*_1 = 1 + \pi^*$, \ie, $\pi^* = -1$}
  which is a contradiction to~$\pi^* = 0$.
  Consequently, $\alpha^*_1 > 0$ needs to holds and, thus, we
  have~$\alpha^*_i > 0$ for all~$i \in \set{1,\ldots,n-1}$
  by Observation~\ref{obs:alpha-i-alpha-i-1}.
\end{proof}

To sum up, we have
\begin{align*}
  & y^*_{i+1} = (y^*_i)^2 \quad \text{and} \quad \alpha^*_i > 0
  \quad \text{for all } i \in \Set{1,\ldots,n-1},\\
  & y^*_i > 0 \quad \text{and} \quad \beta^*_i = 0
    \quad \text{for all } i \in \Set{1,\ldots,n+1},\\
  & y^*_{n+1} = x_1 \quad \text{and} \quad \gamma^* > 0,\\
  & y^*_{n+2} > - x_2 \quad \text{and} \quad (\delta^-)^* = 0,\\
  & y^*_{n+2} = x_2 \quad \text{and} \quad (\delta^+)^* > 0,
\end{align*}
\ie, strict complementarity is satisfied.


\end{document}
